\newtheorem{definition}{Definition}[subsection]
\newtheorem{theorem}[definition]{Theorem}
\newtheorem{corollary}[definition]{Corollary}
\newtheorem{proposition}[definition]{Proposition}
\newtheorem{remark}[definition]{Remark}
\let\c@algorithm\c@definition
\newcommand{\Real}{\mathbb{R}}
\newcommand{\domain}{\Omega}
\newcommand{\N}{\mathbb{N}}
\newcommand{\R}{\mathbb{R}}
\newcommand{\dint}{\,\mathrm{d}} 
\DeclareMathOperator{\Div}{div} 
\newcommand{\grad}{\nabla}
\newcommand{\Diff}{\mathcal{D}}
\crefname{section}{Sec.}{Secs.}
\Crefname{section}{Section}{Sections}
\crefname{theorem}{Thm.}{Thms.}
\Crefname{theorem}{Theorem}{Theorems}
\crefname{proposition}{Prop.}{Props.}
\Crefname{proposition}{Proposition}{Propositions}
\crefname{cor}{Corr. \nameref}{Corrs. \nameref}
\Crefname{cor}{Corr. \nameref}{Corrs. \nameref}
\crefname{lem}{Lemma \nameref}{Lemmas. \nameref}
\Crefname{lem}{Lemma \nameref}{Lemmas \nameref}
\crefname{equation}{}{}
\Crefname{equation}{}{}
\crefname{item}{}{}
\Crefname{item}{}{}
\crefname{figure}{Fig.}{Figs.}
\Crefname{figure}{Figure}{Figures}
\title{Template-Based Image Reconstruction from Sparse Tomographic Data}
\author[1]{Lukas F. Lang}
\author[2]{Sebastian Neumayer}
\author[3]{Ozan \"{O}ktem}
\author[1]{Carola-Bibiane Sch\"{o}nlieb}
\affil[1]{\footnotesize Department of Applied Mathematics and Theoretical
Physics, University of Cambridge, Wilberforce Road, Cambridge CB3 0WA, United Kingdom}
\affil[2]{\footnotesize Department of Mathematics, Technische Universit\"{a}t Kaiserslautern, D-67663 Kaiserslautern, Germany}
\affil[3]{\footnotesize Department of Mathematics, KTH-Royal Institute of Technology, 100 44 Stockholm, Sweden}
\begin{document}

\date{}
\maketitle

\begin{abstract}
We propose a variational regularisation approach for the problem of template-based image reconstruction from indirect, noisy measurements as given, for instance, in X-ray computed tomography.
An image is reconstructed from such measurements by deforming a given template image.
The image registration is directly incorporated into the variational regularisation approach in the form of a partial differential equation that models the registration as either mass- or intensity-preserving transport from the template to the unknown reconstruction.
We provide theoretical results for the proposed variational regularisation for both cases.
In particular, we prove existence of a minimiser, stability with respect to the data, and convergence for vanishing noise when either of the abovementioned equations is imposed and more general distance functions are used.
Numerically, we solve the problem by extending existing Lagrangian methods and propose a multilevel approach that is applicable whenever a suitable downsampling procedure for the operator and the measured data can be provided.
Finally, we demonstrate the performance of our method for template-based image reconstruction from highly undersampled and noisy Radon transform data.
We compare results for mass- and intensity-preserving image registration, various regularisation functionals, and different distance functions.
Our results show that very reasonable reconstructions can be obtained when only few measurements are available and demonstrate that the use of a normalised cross correlation-based distance is advantageous when the image intensities between the template and the unknown image differ substantially.
\end{abstract}

\section{Introduction} \label{sec:intro}

In medical imaging, an image can typically not be observed directly but only through indirect and potentially noisy measurements, as it is the case, for example, in computed tomography (CT) \cite{Nat01}.
Due to the severe ill-posedness of the problem, reconstructing an image from measurements is rendered particularly challenging when only few or partial measurements are available.
This is, for instance, the case in limited-angle computed tomography \cite{Fri13,Nat01}, where limited-angle data is acquired in order to minimise exposure time of organisms to X-radiation.
Therefore, it can be beneficial to impose a priori information on the reconstruction, for instance, in the form of a template image.
However, typically neither its exact position nor its exact shape is known.

In \emph{image registration}, the goal is to find a reasonable deformation of a given template image so that it matches a given target image as closely as possible according to a predefined similarity measure, see \cite{Mod09, Mod03} for an introduction.
When the target image is unknown and only given through indirect measurements, it is referred to as \emph{indirect image registration} and has been explored only recently \cite{CheOkt18, GriCheOkt18, HinSzeWanSalJos12, OktCheDomRavBaj17}.
As a result, a deformation together with a transformed template can be computed from tomographic data.
The prescribed template acts as a prior for the reconstruction and, when chosen reasonably close in a deformation sense, gives outstanding reconstructions in situations where only few measurements are available and competing methods such as filtered backprojection \cite{Nat01} or total variation regularisation \cite{RudOshFat92} fail, see \cite[Sec.~10]{CheOkt18}.

In our setting, deformations are maps from the image domain $\Omega \subset \R^{n}$, $n \in \N$, to itself together with an action that specifies exactly how such a map deforms elements in the shape space, which in this work is the space $L^{2}(\Omega, \R)$ of greyscale images supported in the image domain.
Natural problems are to characterise admissible deformations and to compute these numerically in an efficient manner.

One possible approach is diffeomorphic image registration, where the set of admissible deformations is restricted to diffeomorphisms in order to preserve the topology of structures within an image \cite{You10}.
One can, for instance, consider the group of diffeomorphisms together with the composition as group operation.
Elements in this group act on greyscale images by means of the group action and thereby allow for a rich set of non-rigid deformations, as required in many applications.
For instance, the geometric group action transforms greyscale images in a way such that its intensity values are preserved, whereas the mass-preserving group action ensures that, when the image is regarded as a density, the integral over the density is preserved.

A computational challenge in using the above group formalism is that it lacks a natural vector space structure, which is typically desired for the numerical realisation of the scheme.
Hence, it is convenient to further restrict the set of admissible deformations.
One way to obtain diffeomorphic deformations is to perturb the identity map with a displacement vector field.
Provided that the vector field is reasonably small and sufficiently regular, the resulting map is invertible \cite[Prop. 8.6]{You10}.
For indirect image registration this idea was pursued in \cite{OktCheDomRavBaj17}.

The basic idea of the \emph{large deformation diffeomorphic mapping (LDDMM)} \cite{BegMilTroYou05, DupGreMil98, MilTroYou06, MilTroYou15, Tro98, TroYou15, You10} framework is to generate large deformations by considering flows of diffeomorphisms that arise as the solution of an ordinary differential equation (ODE), the so-called flow equation, with velocity fields that stem from a reproducing kernel Hilbert space.
In order to ensure that the flow equation admits a unique solution, one typically chooses this vector space so that it can be continuously embedded into $C^1(\Omega, \R^{n})$, allowing the application of existence and uniqueness results from Cauchy--Lipschitz theory for ODEs, see \cite[Chap.~1]{Cri07} for a brief introduction.
In \cite{CheOkt18}, the LDDMM framework is adapted for indirect image registration and the authors prove existence, stability, and convergence of solutions for their variational formulation.
Numerically, the problem is solved by gradient descent.

The variational problem associated with LDDMM is typically formulated as an ODE-constrained optimisation problem.
As the flow equation can be directly related to hyperbolic partial differential equations (PDE) via the method of characteristics \cite[Chap.~3.2]{Eva10}, the problem can equivalently be rephrased as a PDE-constrained optimisation problem \cite{HonJosSanStyNie12}.
The resulting PDE is determined by the chosen group action, see \cite[Sec.~6.1.1]{CheOkt18} for a brief discussion.
For instance, the geometric group action is associated with the transport (or advection) equation, while the mass-preserving group action is associated with the continuity equation.
It is important to highlight that the PDE constraint implements both the flow equation and the chosen diffeomorphic group action.

Such an optimal control approach was also pursued for motion estimation and image interpolation \cite{AndSchZul15, BorItoKun02, BorItoKun03, BurDirScho18, CheLor11, HarZacNie09, NieHarZac09}.
In the terminology of optimal control, the PDE represents the state equation, the velocity field the control, and the transformed image the resulting state.
We refer to the books \cite{BorSchu12, DeL15, Gun03, HinPinUlbUlb09} and to the article \cite{HerKun10} for a general introduction to PDE-constrained optimisation and suitable numerical methods.
Let us mention that other methods, such as geodesic shooting \cite{Ash07, MilTroYou06, SinHinJosFle13, ViaRisRueCot11}, exist and constitute particularly efficient numerical approaches.
In particular, this direction has recently been combined with machine learning methods \cite{YanKwiStyNie17}.

A particularly challenging scenario for diffeomorphic image registration occurs when the target image is not contained in the orbit of the template image under abovementioned group action of diffeomorphisms.
For instance, this could happen in the case of the geometric group action due to the appearance of new structures in the target image or due to a discrepancy between the image intensities of the template and the target image.
A possible solution is provided by the \emph{metamorphosis} framework \cite{MilYou01, RicYou15, TroYou05a, TroYou05}, which is an extension to LDDMM that allows for modulations of the image intensities along characteristics of the flow.
The image intensities change according to an additional flow equation with an unknown source.
See \cite[Chap.~13]{You10} for a general introduction and, for instance, \cite{HonJosSanStyNie12} for an application to magnetic resonance imaging.
Let us also mention \cite{NeuPerSte18a}, which adopts a discrete geodesic path model for the purpose of image reconstruction, and \cite{MaaRumSchoSim15}, in which the metamorphosis model is combined with optimal transport.

In \cite{GriCheOkt18}, the metamorphosis framework is adapted for indirect image registration.
The authors prove that their formulation constitutes a well-defined regularisation method by showing existence, stability, and convergence of solutions.
However, in the setting where only few measurements---e.g.\ a few directions in CT---are available, reconstruction of appearing or disappearing structures seems very challenging.

Therefore, in order to obtain robustness with respect to differences in the intensities between the transformed template and the sought target image, we follow a different approach.
We consider not only the standard \emph{sum-of-squared differences (SSD)} but also a distance that is based on the \emph{normalised cross correlation (NCC)} \cite[Chap.~7.2]{Mod09}, as it is invariant with respect to a scaling of the image intensities.

While image registration itself is already an ill-posed inverse problem that requires regularisation \cite{EngHanNeu96}, the indirect setting as described above is intrinsically more challenging.
It can be phrased as an inverse problem, where measurements (or observations) $g \in Y$ are related to an unknown quantity $f \in X$ via the operator equation
\begin{equation}
	K(f) = g + n^{\delta}.
\label{eq:operatorequation}
\end{equation}
Here, $K\colon X \to Y$ is a (not necessarily linear) operator that models the data acquisition, often by means of a physical process, $n^{\delta}$ are measurement errors such as noise, and $X$ and $Y$ are Banach spaces.
When $f$ constitutes an image and $g$ are tomographic measurements, solving \cref{eq:operatorequation} is often referred to as \emph{image reconstruction}.

We use a variational scheme \cite{SchGraGroHalLen09} to solve the inverse problem of indirect image registration, which can be formulated as a PDE-constrained optimisation problem \cite[Sec.~6.1.1]{CheOkt18}.
It is given by
\begin{equation}\label{eq:fun_to_min}
\begin{aligned}
	\min_{v \in V} & \ J_{\gamma, g}(v), \\
	\text{s.t.} & \ C(v),
\end{aligned}
\end{equation}
where $J_{\gamma, g}\colon V \to [0, + \infty]$ is the functional
\begin{equation}\label{eq:fun_to_min2}
	 v \mapsto D(K(f_{v}(T, \cdot)), g) + \gamma \Vert v \Vert_V^2.
\end{equation}
Here, $V$ is an admissible vector space with norm $\Vert \cdot \Vert_{V}$, $D\colon Y \times Y \to \R_{\ge 0}$ is a data fidelity term that quantifies the misfit of the solution against the measurements, and $\gamma > 0$ is a regularisation parameter.
Moreover, $f_{v}(T, \cdot)\colon \Omega \to \R$ denotes the evaluation at time $T > 0$ of the (weak) solution of $C(v)$, which is either the Cauchy problem
\begin{equation*}
	C(v) = \begin{cases}
		\frac{\partial}{\partial t} f(t, x) + v(t, x) \nabla_{x} f(t, x) = 0, & \text{for } (t, x) \in [0, T] \times \Omega, \\
		f(0, x) = f_{0}(x), & \text{for } x \in \Omega,
	\end{cases}
\end{equation*}
governed by the transport equation, or
\begin{equation*}
	C(v) = \begin{cases}
		\frac{\partial}{\partial t} f(t, x) + \mathrm{div}_{x}\bigl(v(t, x) f(t, x)\bigr) = 0, & \text{for } (t, x) \in [0, T] \times \Omega, \\
		f(0, x) = f_{0}(x), & \text{for } x \in \Omega,
	\end{cases}
\end{equation*}
involving the continuity equation.
Here, $f_{0} \in L^2(\Omega,\R)$ denotes an initial condition, which in our case is the template image.

The main goals of this article are the following.
First, to study variational and regularising properties of problem \cref{eq:fun_to_min}, and to develop efficient numerical methods for solving it.
Second, to investigate alternative choices of distance functions $D$, such as the abovementioned NCC-based distance.
Third, to demonstrate experimentally that excellent reconstructions can be computed from highly undersampled and noisy Radon transform data.

Our numerical approach is based on the Lagrangian methods developed in \cite{ManRut17}, called LagLDDMM.
In contrast to most existing approaches, which are mainly first-order methods (see \cite{ManRut17} for a brief classification and discussion), LagLDDMM uses a Gauss--Newton--Krylov method paired with Lagrangian solvers for the hyperbolic PDEs listed above.
The characteristics associated with these PDEs are computed with an explicit Runge--Kutta method.
One of the main advantages of this approach is that Lagrangian methods are unconditionally stable with regard to the admissible step size.
Furthermore, the approach limits numerical diffusion and, in order to evaluate the gradient or the Hessian required for optimisation, does not require the storage of multiple space-time vector fields or images at intermediate time instants.
The scheme can also be implemented matrix-free.

In comparison to abovementioned existing methods for indirect image registration, such as \cite{CheOkt18, GriCheOkt18, HinSzeWanSalJos12, OktCheDomRavBaj17}, our method is conceptually different in several ways.
The first difference concerns the discretisation.
While \cite{CheOkt18, GriCheOkt18, OktCheDomRavBaj17} are mainly based on small deformations and use reproducing kernel Hilbert spaces, our method relies on nonparametric registration.
The main advantages are that it directly allows for a multilevel approach and no kernel parameters need to be chosen.
Moreover, due to the flexibility of the underlying framework it is straightforward to extend our method to parametric registration.
Second, our approach relies on second-order methods for optimisation by using a Gauss--Newton method paired with line search, while the other methods mainly rely on gradient descent.
This allows for a fast decrease of the objective within only few iterations.
Third, our method allows to easily exchange the underlying PDE solver.
Essentially, any solver can be used as long as it can be differentiated efficiently.
The used explicit Runge--Kutta method has the advantage that it does not require the storage of multiple images or repeated interpolation of the template, which can potentially lead to a blurred solution.
Finally, let us mention that \cite{HinSzeWanSalJos12} is conceptually different since both a deformation and a template image are computed.
Our main focus, however, are applications where only very few and noisy measurements are available and the problem of estimating an additional template seems highly underdetermined in such situations.

\paragraph{Contributions}

The contributions of this article are as follows.
First, we provide the necessary theoretical background on (weak) solutions of the continuity and the transport equation, and recapitulate existence and uniqueness theory for characteristic curves for the associated ODE.
In contrast to the results derived in \cite{CheOkt18}, where the template image is assumed to be contained in the space $SBV(\Omega, \R) \cap L^{\infty}(\Omega, \R)$ of essentially bounded functions with special bounded variation, our results only require $L^{2}(\Omega, \R)$ regularity.
In addition, by using results from \cite{DipLio89}, we are able to consider the transport equation in the setting with $H^1$ regularity of vector fields in space as well as in time and with bounded divergence.
Moreover, we show the existence of a minimiser of the problem \cref{eq:fun_to_min}, stability with respect to the data, and convergence for vanishing noise.

Second, in order to solve the problem numerically, we follow a \emph{discretise-then-optimise} approach and extend the LagLDDMM framework \cite{ManRut17} to the indirect setting.
The library itself is an extension of FAIR \cite{Mod09} and, as a result, our implementation provides great flexibility regarding the selected PDE, and can easily be extended to other distances as well as to other regularisation functionals.
The source code of our MATLAB implementation is available online.\footnote{\url{https://doi.org/10.5281/zenodo.2598138}}

Finally, we present numerical results for the abovementioned distances and PDEs.
To the best of our knowledge, the results obtained for indirect image reconstruction based on the continuity equation are entirely novel.
Moreover, we propose to use the NCC-based distance instead of SSD whenever the image intensities of the template and the unknown target are far apart, and show its numerical feasibility.

\section{Theoretical results on the transport and continuity equation} \label{sec:background}
In this section, we review the necessary theoretical background, and state results on the existence and stability of weak solutions of the transport and the continuity equation. Compared to \cite{CheOkt18}, our results are stronger since we do not require space regularity of the template image.
\subsection{Continuity equation}
In what follows, we consider well-posedness of the continuity equation that arises in the LDDMM framework using the mass-preserving group action via the method of characteristics.
The regularity assumptions on $v$ are such that we can apply the theory from \cite{TroYou05a}.

Let $\Omega \subset \Real^n$ be a bounded, open, convex domain with Lipschitz boundary and let $T>0$.
In the following, we examine the continuity equation
\begin{equation}\label{eq:PDE}
	\begin{cases}
	\dfrac{\partial}{\partial t} f(t,x) + \Div_x \bigl(v(t,x) f(t,x) \bigr)  = 0 & \text{for $(t,x)\in [0,T]\times\Omega$,} \\[.5em] 
	f(0,x) = f_0(x) & \text{for $x\in\Omega$},
	\end{cases}
\end{equation}
with coefficients $v \in L^2([0,T],\mathcal V)$ and initial condition $f_0 \in L^2(\Omega, \Real )$, where $\mathcal V$ is a Banach space which is continuously embedded into $C^{1,\alpha}_0(\domain, \Real^n)$ for some $\alpha>0$.
Here $C^{1,\alpha}_0(\domain, \Real^n)$ denotes the closure of $C^\infty_c(\domain, \Real^n)$ under the $C^{1,\alpha}$ norm.
Note that such velocity fields can be continuously extended to the boundary.
Clearly, equation \cref{eq:PDE} has to be understood in a weak sense, i.e.\ a function $f \in C^0([0,T], L^2(\Omega, \Real))$ is said to be a weak solution of \cref{eq:PDE} if
\begin{equation}\label{eq:SimpleWeakFormulation}
	\int_{0}^{T}\int_{\Omega} f(t,x) \Bigl(v(t,x) \grad_x  \eta(t,x) + \frac{\partial}{\partial t} \eta(t,x) \Bigr) \dint x \dint t 
	+ \int_{\Omega} f_0(x) \eta(0,x) \dint x = 0
\end{equation}
holds for all $\eta \in C^\infty_c([0,T) \times \Omega)$.
The corresponding characteristic ODE is
\begin{align}\label{eq:ODE}
	&\begin{cases}\displaystyle{\frac{\dint}{\dint t}} X(t,s,x) = v\bigl( t, X(t,s,x) \bigr)   
	\quad \text{for } (t,s,x) \in [0,T]\times[0,T]\times\Omega, &
	\\[0.5em] 
	X(s,s,x)= x \quad \text{for } x\in \Omega.
	\end{cases}  
\end{align}
In this notation, the first argument of $X$ is the time dependence, the second the initial time, and the third the initial space coordinate.
The following theorem is a reformulation of \cite[Thms.~1 and 9]{TroYou05a} and characterises solutions of \cref{eq:ODE}.
\begin{theorem}\label{thm:DiffeoVelo}
	Let $v \in L^2([0, T], \mathcal V)$ and $s \in [0,T]$ be given.
	There exists a unique global solution $X(\cdot,s,\cdot) \in C^0([0,T],C^1(\overline{\Omega}, \R^n))$ such that $X(s,s, x) = x$ for all $x \in \domain$ and
	\begin{equation*}
		\frac{\dint}{\dint t} X(t,s,x) =
	v(t, X(t,s,x))
	\end{equation*}
	in weak sense (absolutely continuous solutions).
	The solution operator $X_v \colon L^2([0,T],\mathcal V) \to C^0([0,T] \times \overline \Omega, \R^n)$ assigning a flow $X_v$ to every velocity field $v$ is continuous with respect to to the weak topology in $L^2([0,T],\mathcal V)$.
\end{theorem}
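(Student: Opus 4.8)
The plan is to deduce both assertions from \cite[Thms.~1 and~9]{TroYou05a}, whose hypotheses are exactly met because $\mathcal V$ embeds continuously into $C^{1,\alpha}_0(\Omega,\R^n)$; below I indicate how the reduction, and its reformulation in the $(t,s,x)$ notation, would go.

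First I would settle existence and uniqueness. Fix $s\in[0,T]$ and $x\in\overline\Omega$. The embedding provides $c>0$ with $\norm{v(t,\cdot)}{C^{1,\alpha}}\le c\,\norm{v(t,\cdot)}{\mathcal V}$ for a.e.\ $t$, so $t\mapsto\norm{v(t,\cdot)}{C^1}$ lies in $L^2([0,T])\subset L^1([0,T])$ and the right-hand side of \cref{eq:ODE} is Carathéodory with a time-integrable Lipschitz constant in the spatial variable. A Carathéodory/Cauchy--Lipschitz existence and uniqueness result then yields, for as long as the trajectory remains in $\overline\Omega$, a unique absolutely continuous curve solving $X(t,s,x)=x+\int_s^t v(\tau,X(\tau,s,x))\dint\tau$. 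To make this global I would use that each $v(t,\cdot)$ vanishes on $\partial\Omega$, being a $C^{1,\alpha}$-limit of compactly supported fields: the boundary then consists of equilibria, so by uniqueness a trajectory started in $\Omega$ cannot reach $\partial\Omega$ and one started on $\partial\Omega$ stays put; hence $X(\cdot,s,x)$ never leaves the compact set $\overline\Omega$, there is no finite-time blow-up, and the solution extends to all of $[0,T]$.

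Next I would address the regularity of $x\mapsto X(t,s,x)$ and the joint continuity in $t$. Differentiating the integral equation in $x$ gives the variational equation, a linear ODE for $D_xX(t,s,x)$ whose coefficient $D_xv(t,X(t,s,x))$ is continuous in $x$ and $L^2$ in $t$; the classical theory of differentiable dependence on initial data then yields $X(\cdot,s,\cdot)\in C^0([0,T],C^1(\overline\Omega,\R^n))$. Along the way I would record the estimates $|X_v(t,s,x)-X_v(t',s,x)|\le\int_{t'}^t\norm{v(\tau,\cdot)}{\infty}\dint\tau\le c\,\norm{v}{L^2([0,T],\mathcal V)}\,|t-t'|^{1/2}$ and, by Grönwall, $|X_v(t,s,x)-X_v(t,s,y)|\le|x-y|\exp\!\bigl(c\sqrt T\,\norm{v}{L^2([0,T],\mathcal V)}\bigr)$, which show that $X_v$ is equicontinuous on $[0,T]\times\overline\Omega$ with a modulus depending only on $\norm{v}{L^2([0,T],\mathcal V)}$.

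The main obstacle is the continuity of $v\mapsto X_v$ for the weak topology, which I would handle as follows. Given $v_k\rightharpoonup v$ in $L^2([0,T],\mathcal V)$, the uniform boundedness principle gives $M:=\sup_k\norm{v_k}{L^2([0,T],\mathcal V)}<\infty$, so $\{X_{v_k}\}$ is bounded and equicontinuous on $[0,T]\times\overline\Omega$ by the previous step, and Arzelà--Ascoli extracts a subsequence $X_{v_{k_j}}\to Y$ uniformly. I would identify $Y$ by passing to the limit in $X_{v_{k_j}}(t,s,x)=x+\int_s^t v_{k_j}(\tau,X_{v_{k_j}}(\tau,s,x))\dint\tau$, splitting the integrand as $v_{k_j}(\tau,Y(\tau,s,x))+\bigl(v_{k_j}(\tau,X_{v_{k_j}}(\tau,s,x))-v_{k_j}(\tau,Y(\tau,s,x))\bigr)$: the correction is bounded by $\norm{v_{k_j}(\tau,\cdot)}{C^1}\,\norm{X_{v_{k_j}}-Y}{\infty}$, with time integral at most $\sqrt T\,M\,\norm{X_{v_{k_j}}-Y}{\infty}\to0$, while $w\mapsto\int_s^t w(\tau,Y(\tau,s,x))\dint\tau$ is a bounded linear functional on $L^2([0,T],\mathcal V)$ since $|w(\tau,Y(\tau,s,x))|\le\norm{w(\tau,\cdot)}{\infty}\le c\,\norm{w(\tau,\cdot)}{\mathcal V}$, so weak convergence forces $\int_s^t v_{k_j}(\tau,Y)\dint\tau\to\int_s^t v(\tau,Y)\dint\tau$. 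Hence $Y$ solves the integral equation for $v$, the uniqueness part forces $Y=X_v$, and since every subsequence of $\{X_{v_k}\}$ has a further subsequence converging to this same limit, the full sequence converges uniformly to $X_v$. The genuinely delicate point --- the one I would spell out most carefully --- is this interchange of limits: the weak limit cannot be inserted directly into $v_k(\tau,X_{v_k}(\tau,s,x))$ because both the field and the curve move with $k$, and the resolution is to freeze the curve at $Y$, push the curve-discrepancy into a term tamed by $M$ and the already-known uniform convergence, and read the frozen integral as a fixed bounded linear functional on $L^2([0,T],\mathcal V)$ --- which is precisely where the sup-norm control from the embedding $\mathcal V\hookrightarrow C^{1,\alpha}_0(\Omega,\R^n)$ is used. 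Everything else reduces to classical Carathéodory ODE theory and Arzelà--Ascoli, and matches \cite[Thms.~1 and~9]{TroYou05a} recast in the present notation.
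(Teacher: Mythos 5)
Your proposal is correct, but note that the paper does not actually prove this theorem: it is stated as a reformulation of \cite[Thms.~1 and 9]{TroYou05a} and used as a black box. What you have written is in effect a self-contained reconstruction of the cited Trouv\'e--Younes argument in the present setting, and it follows the same classical route: Carath\'eodory existence and uniqueness with a time-integrable Lipschitz constant supplied by the embedding $\mathcal V\hookrightarrow C^{1,\alpha}_0(\Omega,\R^n)$, invariance of $\overline\Omega$ via the boundary equilibria (equivalently, zero extension of $v$ to $\R^n$) together with uniqueness through boundary points, the variational equation for $C^1$-dependence on the initial point, and Gr\"onwall/H\"older-in-time bounds feeding an Arzel\`a--Ascoli compactness argument for the weak-continuity claim. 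The one genuinely delicate step --- passing to the limit in $\int_s^t v_{k}(\tau,X_{v_{k}}(\tau,s,x))\dint\tau$ when both the field and the curve vary --- is handled correctly by freezing the curve at the uniform limit $Y$, absorbing the discrepancy into a term controlled by the uniform bound $M$ and the uniform convergence, and reading the frozen integral as a bounded linear functional on $L^2([0,T],\mathcal V)$, which is exactly where the sup-norm control from the embedding enters; identification of $Y$ with $X_v$ by uniqueness and the subsequence principle then yield convergence of the whole sequence. Two minor remarks: what you establish is sequential weak continuity of $v\mapsto X_v$, which is precisely what the paper uses downstream (e.g.\ in \cref{prop:stability_sol}), and your Lipschitz/H\"older constants should carry the embedding constant $c$, which is immaterial. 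So your proof buys self-containedness where the paper relies on a citation, at the cost of length; the mathematical content is the same as in \cite{TroYou05a}.
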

Since $X(0,t,X(t,0,x)) = x$, we can directly conclude that $X(t,0,\cdot)$ is a diffeomorphism for every $t \in [0,T]$. Now, the diffeomorphism $X(0,t,x)$ can be used to characterise solutions of \cref{eq:PDE} as follows.

\begin{proposition}
	If $v \in L^2([0, T], \mathcal V)$, then the unique weak solution of \cref{eq:PDE}, as defined in \cref{eq:SimpleWeakFormulation}, is given by $f(t,x) = \det (\Diff_x X(0,t,x)) f_0(X(0,t,x))$, where $\Diff_{x} X$ denotes the Jacobian of $X$.
\end{proposition}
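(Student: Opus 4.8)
The plan is to verify directly that the candidate $f(t,x) = \det(\Diff_x X(0,t,x)) f_0(X(0,t,x))$ satisfies the weak formulation \cref{eq:SimpleWeakFormulation}, and then to invoke uniqueness. For the first part, fix a test function $\eta \in C^\infty_c([0,T)\times\Omega)$ and consider the scalar function $t \mapsto \int_\Omega f(t,x)\eta(t,x)\dint x$. The key is a change of variables: substituting $y = X(0,t,x)$, equivalently $x = X(t,0,y)$, the Jacobian factor $\det(\Diff_x X(0,t,x))$ is precisely the density that makes this substitution turn $\int_\Omega f(t,x)\eta(t,x)\dint x$ into $\int_\Omega f_0(y)\,\eta(t, X(t,0,y))\dint y$. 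This is the pushforward/transport-of-measure identity and is exactly why the determinant appears in the mass-preserving (continuity) case as opposed to the transport case. By \cref{thm:DiffeoVelo} the flow $X(t,0,\cdot)$ is a $C^1$ diffeomorphism of $\overline\Omega$ for each $t$, so the change of variables is legitimate and the Jacobian is continuous and nonvanishing.

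Next I would differentiate $t \mapsto \int_\Omega f_0(y)\,\eta(t, X(t,0,y))\dint y$ in $t$. Because $f_0 \in L^2(\Omega,\Real)$ and $\eta$ together with its derivatives is bounded and the flow $(t,y)\mapsto X(t,0,y)$ is Lipschitz in $t$ uniformly in $y$ (from the ODE \cref{eq:ODE} and the embedding $\mathcal V \hookrightarrow C^{1,\alpha}_0$), one may differentiate under the integral sign for a.e.\ $t$, obtaining
\begin{equation*}
	\frac{\dint}{\dint t}\int_\Omega f_0(y)\,\eta(t,X(t,0,y))\dint y
	= \int_\Omega f_0(y)\Bigl(\tfrac{\partial}{\partial t}\eta(t,X(t,0,y)) + \grad_x\eta(t,X(t,0,y))\cdot v(t,X(t,0,y))\Bigr)\dint y,
\end{equation*}
where I used $\tfrac{\dint}{\dint t}X(t,0,y) = v(t,X(t,0,y))$. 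Changing variables back via $x = X(t,0,y)$ shows this equals $\int_\Omega f(t,x)\bigl(\tfrac{\partial}{\partial t}\eta(t,x) + v(t,x)\grad_x\eta(t,x)\bigr)\dint x$. Then integrating in $t$ over $[0,T]$, using that $\eta(T,\cdot)=0$ and $\eta(0,X(0,0,y)) = \eta(0,y)$ together with $X(0,0,y)=y$, yields exactly \cref{eq:SimpleWeakFormulation}. This establishes that the candidate is a weak solution.

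For uniqueness, I would argue that the difference $w$ of two weak solutions solves \cref{eq:SimpleWeakFormulation} with $f_0 = 0$, and then either appeal to the theory of \cite{TroYou05a} (whose Thms.~1 and 9 underpin \cref{thm:DiffeoVelo}) or, more self-containedly, run the same change of variables in reverse: pulling back any weak solution $f$ along the flow, the function $h(t,y) := f(t, X(t,0,y))/\det(\Diff_y X(t,0,y))$ (well defined since the Jacobian is continuous and bounded away from zero on $\overline\Omega\times[0,T]$) is shown to be constant in $t$ in the distributional sense, forcing $h(t,\cdot) = f_0$ and hence the stated formula. The main obstacle I anticipate is the low regularity: $f_0$ is merely $L^2$ and $v$ is only $L^2$ in time, so neither $f$ nor the $t$-derivatives exist pointwise, and some care is needed to justify differentiating under the integral and the change of variables at the level of distributions rather than classically — in practice this is handled by first mollifying $f_0$ (for which the computation is classical), obtaining the identity for smooth initial data, and then passing to the limit using continuity of $v\mapsto X_v$ and the $L^2$–boundedness of the Jacobian factors.
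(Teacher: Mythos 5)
Your verification that the candidate satisfies \cref{eq:SimpleWeakFormulation} is essentially the paper's argument: change variables along the flow, use $\frac{\dint}{\dint t}X(t,0,y)=v(t,X(t,0,y))$ and absolute continuity of $t\mapsto\eta(t,X(t,0,y))$, and integrate in time (note only that, since $v$ is merely $L^2$ in time, the flow is absolutely continuous/H\"older in $t$, not Lipschitz, which suffices here). However, two points that the paper must and does handle are missing or insufficient in your plan. First, the definition of weak solution requires $f\in C^0([0,T],L^2(\Omega,\Real))$, and with $f_0$ only in $L^2$ this is not automatic: one needs continuity of $t\mapsto X(0,t,\cdot)$ into $C^1(\overline\Omega,\R^n)$-type spaces, continuity in $L^2$ of the composition $t\mapsto f_0(X(0,t,\cdot))$ (the paper invokes \cite[Cor.~3]{NeuPerSte18} together with uniform Jacobian bounds from \cite{TroYou05a}), and continuity of $t\mapsto\det(\Diff_x X(0,t,\cdot))$. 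You never verify this membership, so as written you have not shown the formula defines a weak solution in the paper's sense.

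Second, and more seriously, your uniqueness step has a genuine gap. Appealing to \cite{TroYou05a} does not work, since those results concern the flow of the ODE, not uniqueness of distributional solutions of the continuity equation with $L^2$ data. Your self-contained alternative—pulling back an arbitrary weak solution along the flow and ``showing'' that $h$ is constant in $t$ in the distributional sense—simply restates the uniqueness assertion: at this regularity you cannot insert $\eta(t,X(0,t,x))$-type test functions (they are only $C^1$ in space and absolutely continuous in time), and making the argument rigorous requires a commutator/renormalisation argument of DiPerna--Lions type; this is exactly what the paper uses, citing \cite[Cor.~II.1]{DipLio89} for uniqueness in $L^\infty([0,T],L^2(\Omega,\Real))$ via the embedding of $\mathcal V$ into $C^1_0(\domain,\R^n)$. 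Mollifying $f_0$ does not help here, because uniqueness must rule out an arbitrary competing weak solution, not merely ones of the given form. (Minor slip: the conserved quantity is $f(t,X(t,0,y))\det(\Diff_y X(t,0,y))$, i.e.\ multiplied, not divided, by the Jacobian determinant.)
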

\begin{proof}
	The proof is divided in three steps.
	First, we want to show that $f$ satisfies the regularity conditions of weak solutions. For this purpose, the first step is to show $X(0,\cdot,\cdot) \in C^0([0,T],C^0(\overline{\Omega}, \R^n))$, i.e.\ that the flow is continuous in the initial values.
	Clearly, $X(0,t,\cdot)\in C^0(\overline{\Omega}, \R^n)$ for every $t \in [0,T]$.
	For an arbitrary sequence $t_i \to t$ we get
%	\begin{align*}
%	&\Vert X(0,t_n,\cdot)-X(0,t,\cdot)\Vert_{C^0(\overline{\domain})}\\
%	&\leq \Vert \Diff_x X(0,t_n,\cdot) \Vert_{C^0(\overline{\domain})} \Vert x-X(t_n,t,\cdot)\Vert_{C^0(\overline{\domain})}\\
%	&= \Vert (\Diff_x X(t_n,0,\cdot))^{-1}(X(0,t_n,\cdot)) \Vert_{C^0(\overline{\domain})} \Vert x-X(t_n,t,\cdot)\Vert_{C^0(\overline{\domain})}\\
%	&= \Vert (\Diff_x X(t_n,0,\cdot))^{-1} \Vert_{C^0(\overline{\domain})} \Vert x-X(t_n,t,\cdot)\Vert_{C^0(\overline{\domain})} \to 0,
%	\end{align*}
%	where the first factor is bounded since $X(\cdot,0,\cdot) \in C^0([0,T],C^1(\overline{\Omega}, \R^n))$ and matrix inversion is continuous.
	\begin{align*}
	\Vert X(0,t_i,\cdot)-X(0,t,\cdot)\Vert_{C^0(\overline{\domain})}
	\leq \Vert \Diff_x X(0,t_i,\cdot) \Vert_{C^0(\overline{\domain})} \Vert \text{Id} -X(t_i,t,\cdot)\Vert_{C^0(\overline{\domain})} \to 0,
	\end{align*}
	where the first factor is bounded due to \cite[Lemma~9]{TroYou05a}.
	Next, using the sequence $X_i(\cdot) = X(0,t_i,\cdot)$, it follows $f_0(X(0,\cdot,\cdot)) \in C^0([0,T], L^2(\Omega, \Real))$, where the continuity in time follows from \cite[Cor.~3]{NeuPerSte18}.
	Then, by differentiating $X(0,t,X(t,0,x)) = x$ and rearranging the terms we obtain
\begin{equation*}
	\det (\Diff_x X(0,\cdot,\cdot)) = \det (\Diff_x X(\cdot,0,\cdot))^{-1}(X(0,\cdot,\cdot)) \in C^0([0,T]\times \overline{\Omega}),
\end{equation*}
	since all involved expressions are continuous. Finally, we conclude $f \in C^0([0,T], L^2(\Omega, \Real))$, which follows from
	\begin{multline}
		\Vert f(t,\cdot) - f(t_i,\cdot)\Vert_{L^2(\Omega)}\\
		\leq\Vert \det (\Diff_x X(0,t,x)) - \det (\Diff_x X(0,t_i,x))\Vert_{C^0(\overline{\domain})} \Vert f_0(X(0,t,x)) \Vert_{L^2(\Omega)}\\
		+ \Vert \det (\Diff_x X(0,t_i,x)) \Vert_{C^0(\overline{\domain})} \Vert f_0(X(0,t,x)) - f_0(X(0,t_i,x)) \Vert_{L^2(\Omega)},
	\end{multline}
	since both summands converge to zero.
	
	The second step is to show that \cref{eq:SimpleWeakFormulation} is satisfied. Note that $X(\cdot,0,x)$ is differentiable in $t$ for a.e.\ $t \in [0,T]$, since it is absolutely continuous by definition.
	By inserting $f$ into \cref{eq:SimpleWeakFormulation} and using the transformation formula, we get
	\begin{multline}
	\int_{0}^{T}\int_{\Omega} f(t,x) \left( v(t,x) \grad_x \eta(t,x) 
	+ \frac{\partial \eta(t,x)}{\partial t} \right)\dint x \dint t + \int_{\Omega} f_0(x) \eta(0,x) \dint x\\
	= \int^{T}_0\int_{\Omega} \det \bigl( \Diff_x X(t,0,x)\bigr) f\bigl(t,X(t,0,x)\bigr)  
	\dfrac{d}{dt} \eta\bigl(t,X(t,0,x)\bigr) \dint x \dint t 
	+ \int_{\Omega} f_0(x) \eta(0,x) \dint x\\
	= \int^{T}_0\int_{\Omega} f_0(x)  
	\dfrac{d}{dt} \eta\bigl(t,X(t,0,x)\bigr) \dint x \dint t 
	+ \int_{\Omega} f_0(x) \eta(0,x) \dint x = 0.
	\end{multline}
	For the last equality we used that $\eta(t,X(t,0,x))$ is absolutely continuous.
	
	The last step is to prove uniqueness of weak solutions, i.e.\ that every solution has the given form.
	Let $f_1,f_2$ be two different solutions, then we can find a $t$ such that $\Vert f_1(t,\cdot) - f_2(t,\cdot) \Vert_{L^2(\Omega)} > 0$.
	By continuity in time we can find an interval $I$ of length $\delta > 0$ that contains $t$, and a constant $c>0$ such that
\begin{equation*}
	\Vert f_1(s,\cdot) - f_2(s,\cdot) \Vert_{L^2(\domain)} \geq c
\end{equation*}
	for all $s \in I$.
	However, weak solutions are unique in $L^\infty([0,T], L^2(\Omega, \Real))$, see \cite[Cor.~II.1]{DipLio89}, where we used the embedding of $\mathcal V$ into $C^1_0(\domain, \R^n)$.
	This yields a contradiction.
\end{proof}
Additionally, we can state and prove the following stability result for solutions of \eqref{eq:PDE}.
\begin{proposition}[Stability]\label{prop:stability_sol}
	Let $v_i \rightharpoonup v$ in $L^2([0, T],\mathcal V)$ and $f_i$ denote the weak solution of \cref{eq:PDE} corresponding to $v_i$.
	Then for every $t \in [0,T]$, there exists a subsequence, also denoted with $f_i$, such that $f_i(t,\cdot) \to f(t,\cdot)$ in $L^2(\domain, \R)$.
\end{proposition}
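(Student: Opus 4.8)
The plan is to exploit the explicit representation formula from the preceding proposition, namely $f_i(t,x) = \det\bigl(\Diff_x X_i(0,t,x)\bigr)\, f_0\bigl(X_i(0,t,x)\bigr)$ with $X_i := X_{v_i}$ the flow of $v_i$, and to pass to the limit factor by factor. By \cref{thm:DiffeoVelo} the solution operator $X_{\Cdot}$ is continuous from $L^2([0,T],\mathcal V)$ equipped with the weak topology into $C^0([0,T]\times\overline\Omega,\R^n)$, so $v_i\rightharpoonup v$ already gives $X_i \to X := X_v$ uniformly on $[0,T]\times\overline\Omega$; in particular $X_i(0,t,\cdot)\to X(0,t,\cdot)$ and $X_i(t,0,\cdot)\to X(t,0,\cdot)$ uniformly for the fixed $t$ under consideration.

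The first real step is to upgrade this to convergence of the Jacobians. Since $v_i\rightharpoonup v$, the sequence $(v_i)$ is bounded in $L^2([0,T],\mathcal V)$, and hence, via the Gr\"onwall-type estimates underlying \cref{thm:DiffeoVelo} (see \cite[Lemma~9]{TroYou05a}) together with the embedding $\mathcal V\hookrightarrow C^{1,\alpha}_0(\Omega,\R^n)$, the flows $X_i(0,t,\cdot)$ and $X_i(t,0,\cdot)$ are bounded in $C^{1,\alpha}(\overline\Omega,\R^n)$ uniformly in $i$, and their Jacobian determinants are bounded above and below away from zero uniformly in $i$. By Arzel\`a--Ascoli, $\bigl(\Diff_x X_i(0,t,\cdot)\bigr)_i$ is precompact in $C^0(\overline\Omega)$, so along a subsequence it converges uniformly to some continuous $A$; combining this with $X_i(0,t,\cdot)\to X(0,t,\cdot)$ uniformly shows that $X(0,t,\cdot)\in C^1$ with $\Diff_x X(0,t,\cdot)=A$. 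As the limit is thus independent of the subsequence, the whole sequence converges and, by continuity of the determinant, $\det\bigl(\Diff_x X_i(0,t,\cdot)\bigr)\to\det\bigl(\Diff_x X(0,t,\cdot)\bigr)$ uniformly on $\overline\Omega$.

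It remains to treat the composition term. The maps $\psi_i:=X_i(0,t,\cdot)$ are diffeomorphisms of $\overline\Omega$ converging uniformly to $\psi:=X(0,t,\cdot)$, with the Jacobians of $\psi_i$ and of $\psi_i^{-1}=X_i(t,0,\cdot)$ uniformly bounded; this is exactly the situation in which $f_0\circ\psi_i\to f_0\circ\psi$ in $L^2(\Omega,\R)$, as already invoked in the proof of the preceding proposition through \cite[Cor.~3]{NeuPerSte18}. For completeness one argues by density: for continuous $g$ the convergence $g\circ\psi_i\to g\circ\psi$ in $L^2$ is immediate from uniform convergence, and for general $f_0\in L^2(\Omega,\R)$ one inserts such a $g$ and bounds $\Vert f_0\circ\psi_i-g\circ\psi_i\Vert_{L^2}\le C\Vert f_0-g\Vert_{L^2}$ using the change-of-variables formula and the uniform lower bound on the Jacobian determinants. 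Writing $f_i(t,\cdot)=\det\bigl(\Diff_x X_i(0,t,\cdot)\bigr)\,(f_0\circ\psi_i)$ as the product of a uniformly convergent, uniformly bounded sequence and an $L^2$-convergent sequence then yields $f_i(t,\cdot)\to\det\bigl(\Diff_x X(0,t,\cdot)\bigr)\,(f_0\circ\psi)$ in $L^2(\Omega,\R)$, and by the preceding proposition this limit is the weak solution $f(t,\cdot)$ of \cref{eq:PDE} associated with $v$.

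The main obstacle is the second step: \cref{thm:DiffeoVelo} supplies only $C^0$, not $C^1$, convergence of the flows under weak convergence of the velocities, so convergence of the Jacobian determinants does not come for free. The remedy is the additional spatial regularity built into the embedding $\mathcal V\hookrightarrow C^{1,\alpha}_0$, which produces uniform $C^{1,\alpha}$ bounds on the flows and hence $C^0$-compactness of their Jacobians; identifying the compactness limit with $\Diff_x X(0,t,\cdot)$ then rests on the elementary fact that uniform convergence of maps together with uniform convergence of their derivatives forces differentiability of the limit. This compactness argument is also where the subsequence in the statement formally enters, although the above shows it can in fact be avoided.
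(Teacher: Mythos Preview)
Your proof is correct and follows essentially the same route as the paper: both use the explicit representation formula, obtain $C^0$ convergence of the flows from \cref{thm:DiffeoVelo}, invoke uniform $C^{1,\alpha}$ bounds on the flows (the paper cites \cite[Thm.~3.1.10]{Effland17} for this, you appeal to the Gr\"onwall-type estimates behind \cref{thm:DiffeoVelo}), upgrade to $C^1$-convergence of the flows via the compact embedding $C^{0,\alpha}\hookrightarrow C^0$/Arzel\`a--Ascoli, and use \cite[Cor.~3]{NeuPerSte18} for the composition term. Your additional observation---that identifying the compactness limit with $\Diff_x X(0,t,\cdot)$ pins down the limit uniquely and hence makes the subsequence unnecessary---is a legitimate sharpening the paper does not carry out.
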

\begin{proof}
	The solution of \cref{eq:ODE} corresponding to $v_i$ is denoted by $X_i$.
	Fix an arbitrary $t\in[0,T]$.
	From \cref{thm:DiffeoVelo} we conclude $\Vert X_i(0,t,\cdot) - X(0,t,\cdot)\Vert_{C^0(\overline{\domain})} \to 0$.
	Further, \cite[Thm.~3.1.10]{Effland17} implies that $X_i(0,t,\cdot)$ is uniformly bounded for all $i\in \N$ in $C^{1,\alpha}(\overline{\domain})$, which implies $f_0(X_i(0,t,\cdot)) \to f_0(X(0,t,\cdot))$ in $L^2(\domain, \R)$ by \cite[Cor.~3]{NeuPerSte18}.
	
	It is left to show that a subsequence, also denoted by $X_i$, exists such that $X_i(0,t,\cdot) \to X(0,t,\cdot)$ in $C^1(\overline{\domain},\Real^n)$. This concludes the proof since it also implies the convergence of $\det (\Diff_x X_i(0,t,\cdot)) \to \det (\Diff_x X(0,t,\cdot))$ in $C^0(\overline{\domain})$.
	However, $X_i(0,t,\cdot)$ is uniformly bounded in $C^{1,\alpha}(\overline{\domain}, \R^n)$ and it follows that $\Diff_x X_i(0,t,\cdot)$ is uniformly bounded in $C^{0,\alpha}(\overline{\domain}, \R^{n\times n})$.
	By using the compact embedding of $C^{0,\alpha}(\overline{\domain}, \R^{n\times n})$ into $C^0(\overline{\domain}, \R^{n\times n})$ \cite[Lemma 6.33]{Gilbarg2015}, there exists a subsequence of $X_i(0,t,\cdot)$ that converges to $X(0,t,\cdot)$ in $C^{1}(\overline{\domain}, \R^n)$.
\end{proof}

\subsection{Transport equation with \texorpdfstring{$H^1$}{H1} regularity}
Here, we prove well-posedness of the transport equation that arises in the LDDMM framework using the geometric group action.
Compared to the previous section, the space regularity assumptions on $v$ are weaker and fit the setting in \cite{DipLio89}.

The transport equation reads as
\begin{equation}\label{eq:PDE2}
\begin{cases}
\dfrac{\partial}{\partial t} f(t,x) + v(t,x) \grad_x f(t,x)  = 0 \quad &\text{for $(t,x)\in [0,T]\times\Omega$,} \\[.5em] 
f(0,x) = f_0(x) \quad &\text{for $x\in\Omega$},
\end{cases}
\end{equation}
with coefficients
\begin{equation}
	v \in A \coloneqq \left\{v \in H^1([0,T] \times \Omega)^n \cap L^2([0,T],H^1_0(\Omega)^n)\colon \Vert \Div_x v \Vert_{L^\infty([0,T]\times \domain)} \leq C\right\}
\label{eq:seta}
\end{equation}
for some fixed constant $C$ and initial value $f_0 \in L^2(\Omega, \R)$.
The admissible set $A$ consists of all $H^1$ functions that are zero on the boundary of the spatial domain and have bounded divergence in the $L^\infty$ norm.

Note that the set $A$ is a subset of $H^1([0,T] \times \Omega)^n$, which is closed and convex so that it is a weakly closed subset of a reflexive Banach space.
In the following, we only check that $A$ is closed.
Let $v_i$ be a convergent sequence in $A$ with limit $v$.
Since the two involved spaces are Banach spaces, we only have to check that $v$ satisfies the constraint.
Assume that $\Vert \Div_x v \Vert_{L^\infty([0,T]\times \domain)} > C$, then there exists a set $B$ with positive measure and an $\epsilon>0$ such that for all $x\in B$ we have $\vert \Div_x v(x) \vert \geq C + \epsilon$.
Hence, we get $\Vert \Div_x v_i -\Div_x v\Vert_{L^2([0,T]\times \domain)}\geq \sqrt{\mu(B)}\epsilon$, which contradicts the convergence in $H^1$.

Again, equation \cref{eq:PDE2} has to be understood in the weak sense so that $f \in C^0([0,T], L^2(\Omega, \Real))$ is said to be a solution of \cref{eq:PDE2} if it satisfies
\begin{equation}\label{eq:SimpleWeakFormulation2}
\int_{0}^{T}\int_{\Omega} f(t,x) \Bigl(\Div_x \bigl(v(t,x) \eta(t,x)\bigr) + \frac{\partial}{\partial t} \eta(t,x) \Bigr) \dint x \dint t 
+ \int_{\Omega} f_0(x) \eta(0,x) \dint x = 0
\end{equation}
for all $\eta \in C^\infty_c([0,T) \times \Omega)$.
The next theorem is an existence and stability result, see \cite[Cors. II.1 and II.2, Thm.~II.5]{DipLio89}.
\begin{theorem}[Existence \& Stability]
	For every $v \in A$ there exists a unique weak solution $f \in C^0([0,T], L^2(\Omega, \R))$ of \cref{eq:PDE2}.
	If $v_i \in A$ converges to $v\in A$ in the norm of $L^2([0,T]\times \Omega, \R^n)$,
	then the corresponding sequence of  weak solutions $f_i \in C^0([0,T], L^2(\Omega, \R))$ converges to $f$ in $C^0([0,T], L^2(\Omega, \R))$.
\end{theorem}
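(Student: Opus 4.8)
The plan is to deduce the theorem from the transport and renormalisation theory of DiPerna and Lions \cite{DipLio89} after reducing the problem on the bounded domain $\domain$ to one on all of $\Real^n$. First I would extend every $v \in A$ by zero in the spatial variable to a vector field $\tilde v$ on $[0,T]\times\Real^n$. Since $v \in L^2([0,T],H^1_0(\domain)^n)$ has vanishing trace on $\partial\domain$, the zero extension lies in $L^2([0,T],H^1(\Real^n)^n)$, has spatially compact support, and its distributional divergence is the zero extension of $\Div_x v$ — no singular part on $\partial\domain$ appears \emph{precisely because} of the $H^1_0$ condition — so $\Vert\Div_x\tilde v\Vert_{L^\infty([0,T]\times\Real^n)}=\Vert\Div_x v\Vert_{L^\infty([0,T]\times\domain)}\le C$. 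These are exactly the structural hypotheses under which \cite{DipLio89} operates (Sobolev regularity in space, bounded divergence, sub-linear growth — trivial here by compact support). Extending $f_0$ by zero as well, any weak solution of \cref{eq:PDE2} in the sense of \cref{eq:SimpleWeakFormulation2} corresponds to a whole-space weak solution with data $\tilde f_0$ (test functions supported in $\domain$ see only $\tilde v=v$), and conversely: since $\tilde v\equiv 0$ on the open set $\Real^n\setminus\overline{\domain}$, on that set the solution satisfies $\partial_t\tilde f=0$ weakly with zero initial value, hence is supported in $\overline{\domain}$ and restricts to a solution of \cref{eq:PDE2}.

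For existence and uniqueness I would then apply \cite[Cor.~II.1]{DipLio89} to the whole-space problem: it yields a weak solution $\tilde f$, unique in $L^\infty([0,T],L^2(\Real^n))$, and the renormalisation property (valid for Sobolev fields with bounded divergence) provides a representative in $C^0([0,T],L^2(\Real^n))$ — time continuity follows from the renormalised identity for $\beta(s)=s^2$, which gives $\tfrac{\dint}{\dint t}\Vert\tilde f(t,\cdot)\Vert_{L^2}^2=\int_{\Real^n}(\Div_x\tilde v)\,\tilde f^2\,\dint x\in L^1([0,T])$. Restricting to $\domain$ yields the asserted $f\in C^0([0,T],L^2(\domain,\Real))$, with uniqueness in this class immediate from uniqueness in the larger class $L^\infty([0,T],L^2)$.

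For stability I would invoke the DiPerna--Lions stability result \cite[Cor.~II.2, Thm.~II.5]{DipLio89}. Its hypotheses hold because $v_i\to v$ in $L^2([0,T]\times\domain,\Real^n)$ (hence in $L^1_{\mathrm{loc}}$ after zero extension) while $\Vert\Div_x v_i\Vert_{L^\infty}\le C$ uniformly — this uniform divergence bound is exactly what $v_i\in A$ buys us and is the compactness ingredient the theorem requires — and the data $f_0$ is common. This gives convergence of the extended solutions, strongly in $C^0([0,T],L^2)$; the underlying mechanism I would describe is the weak (sequential) convergence $\tilde f_i(t,\cdot)\rightharpoonup\tilde f(t,\cdot)$ in $L^2$ together with convergence of the norms $\Vert\tilde f_i(t,\cdot)\Vert_{L^2}\to\Vert\tilde f(t,\cdot)\Vert_{L^2}$, read off from the renormalised $L^2$-identity above, which upgrades weak to strong convergence in the Hilbert space $L^2$, made uniform in $t$ by a Grönwall bound $\Vert\tilde f_i(t,\cdot)\Vert_{L^2}^2\le e^{Ct}\Vert f_0\Vert_{L^2}^2$. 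Restricting to $\domain$ concludes the proof.

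The main obstacle is the stability part: one must check that the DiPerna--Lions stability theorem genuinely applies under \emph{merely} $L^2$-convergence of the velocity fields — there is no gradient control, let alone equi-integrability, on the sequence $v_i$, so the argument has to rely entirely on the limit field $v$ being renormalising and on the divergences staying uniformly bounded — and then on extracting \emph{strong} (not merely weak) convergence in $C^0([0,T],L^2)$, for which the renormalised $L^2$-norm identity is the crucial tool that must be used with care. The boundary bookkeeping in the zero-extension step is routine by comparison, but it does require the $H^1_0$ assumption to rule out a spurious divergence measure on $\partial\domain$.
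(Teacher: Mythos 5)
Your proposal is correct and follows essentially the same route as the paper's proof: extend $v$ and $f_0$ by zero to $\R^n$ (justified by the $H^1_0$ boundary condition), obtain existence and uniqueness from \cite[Cors.~II.1 and II.2]{DipLio89} together with the renormalisation property, and deduce stability by recasting the statement into the form of \cite[Thm.~II.5]{DipLio89}, whose hypotheses only require Sobolev regularity of the limit field plus the uniform divergence bound and $L^1_{\mathrm{loc}}$ convergence of the $v_i$. You simply spell out in more detail the equivalence of the weak formulations under zero extension and the mechanism inside the DiPerna--Lions stability theorem, which the paper leaves implicit.
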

\begin{proof}
	The existence and uniqueness of weak solutions follows from \cite[Corrs.~II.1 and II.2]{DipLio89}.
	Note that these solutions are also renormalised due to \cite[Thm.~II.3]{DipLio89}.
	
	We recast the second part of the theorem such that it has the exact form of \cite[Thm.~II.5]{DipLio89}.
	First, note that both the velocity fields and the initial condition can be extended to $\R^n$ by zero outside of $\Omega$ due to boundary condition of $A$.
	Due to the conditions on $v$, the weak formulation is equivalent to the one for the extension in the $\R^n$ setting.
	The uniform boundedness condition on $f_i$ is satisfied since $\domain$ is bounded.
\end{proof}
\begin{corollary}
	Let $v_i \rightharpoonup v\in A$ with the inner product of $H^1([0,T] \times \Omega)^n$.
	Then, $f_i$ converges to $f$ in $C^0([0,T], L^2(\Omega, \R))$.
\end{corollary}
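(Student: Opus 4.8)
The plan is to reduce the claim to the preceding Theorem (Existence \& Stability), which already provides convergence of the weak solutions under \emph{strong} convergence of the velocity fields in $L^2([0,T]\times\Omega,\R^n)$. Hence it suffices to upgrade the weak convergence $v_i \rightharpoonup v$ in $H^1([0,T]\times\Omega)^n$ to strong convergence in $L^2([0,T]\times\Omega)^n$ and then invoke that theorem.

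For the upgrade, first I would note that $[0,T]\times\Omega$ is a bounded Lipschitz domain in $\R^{n+1}$, being the product of the interval $[0,T]$ with the bounded, convex, Lipschitz domain $\Omega$, so the embedding $H^1([0,T]\times\Omega)\hookrightarrow L^2([0,T]\times\Omega)$ is compact by the Rellich--Kondrachov theorem. Since $v_i \rightharpoonup v$ in $H^1$, the sequence $(v_i)$ is bounded in $H^1([0,T]\times\Omega)^n$; by compactness, every subsequence of $(v_i)$ admits a further subsequence converging strongly in $L^2([0,T]\times\Omega)^n$, and the limit of any such subsequence must coincide with $v$, because strong $L^2$ convergence implies weak $L^2$ convergence, weak $H^1$ convergence to $v$ implies weak $L^2$ convergence to $v$, and weak limits are unique. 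A sequence in a metric space all of whose subsequences have a further subsequence converging to the same point converges to that point, hence $v_i \to v$ in $L^2([0,T]\times\Omega)^n$. (Alternatively, since the $v_i$ are bounded in $L^2([0,T],H^1_0(\Omega)^n)$ with $\partial_t v_i$ bounded in $L^2([0,T]\times\Omega)^n$, one may instead invoke the Aubin--Lions lemma together with the compact embedding $H^1_0(\Omega)\hookrightarrow L^2(\Omega)$, avoiding any discussion of the regularity of the space--time cylinder.)

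Finally, since all $v_i$ and $v$ lie in $A$ and $v_i \to v$ in the norm of $L^2([0,T]\times\Omega,\R^n)$, the Theorem (Existence \& Stability) applies directly and yields $f_i \to f$ in $C^0([0,T], L^2(\Omega,\R))$, which is the assertion. The only step requiring genuine care is the compactness argument, i.e.\ deducing strong $L^2$ convergence from weak $H^1$ convergence by justifying Rellich--Kondrachov on the space--time cylinder (or checking the hypotheses of Aubin--Lions in the alternative route); the remainder is a direct appeal to the previous theorem together with a routine subsequence argument.
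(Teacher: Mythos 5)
Your proposal is correct and is essentially the paper's own argument: the paper likewise combines the preceding existence-and-stability theorem with the compact (Rellich) embedding of $H^1([0,T]\times\Omega)^n$ into $L^2([0,T]\times\Omega)^n$, which you merely spell out via the standard subsequence-uniqueness argument. The Aubin--Lions alternative you mention is a fine variant but not needed here.
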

\begin{proof}
	Combine the previous theorem with the compact embedding of $H^1([0,T] \times \Omega)^n$ into $L^2([0,T] \times \Omega)^n$ (Rellich embedding theorem \cite[A6.4]{Alt2012}).
\end{proof}
\begin{remark}
	Note that the same arguments can be used if we use higher spatial regularity, such as $H^2$, in this section. From a numerical point of view, the bound on the divergence is always satisfied for $C$ large enough if we use linear interpolation for the velocities on a fixed grid. Here we use that all norms are equivalent on finite dimensional spaces.
\end{remark}

\section{Regularising properties of template-based image reconstruction}
In this section, we prove regularising properties of template-based reconstruction as defined in \cref{eq:fun_to_min}.
Recall that the problem reads
\begin{equation*}
\begin{aligned}
\min_{v \in V} &\ D(K(f_{v}(T, \cdot)), g) + \gamma \Vert v \Vert_V^2, \\
\text{s.t.} & \ C(v),
\end{aligned}
\end{equation*}
where $C(v)$ is the Cauchy problem with either the transport or the continuity equation.
%Registration aims to deform a given template image such that it matches a given target image.
%In indirect registration, the target is only known in terms of an indirect observation through some operator $K\colon L^2(\domain, \R) \to Y$ and the corresponding data $g \in Y$.
%In order to rate the quality of the matching, we use a data discrepancy functional $D \colon Y \times Y \to \R_{\geq 0}$ on the data space.
%The indirect registration model is formulated in variational form, where the minimisation is over a suitable set of velocities in $V \coloneqq L^2\bigl([0,T],\mathcal V\bigr)$.
The admissible set $\mathcal V$ is chosen such that the regularity requirements stated in the previous section are satisfied.
%\begin{equation}
%	J_{\gamma, g} : v \in V \mapsto \gamma \Vert v \Vert_V^2 + D\bigl(K(f_v(T,\cdot)), g\bigr),
%\end{equation}
%where $f_v$ is the unique weak solution of the PDE \cref{eq:PDE} corresponding to the velocity $v$.
%The functional consists of the data fidelity term $D(K(f_v(T,\cdot)), g)$, a cost $\Vert v \Vert_V^2$ for deforming $f_0$ to $f_v(T,\cdot)$, and a regularisation parameter $\gamma > 0$, which controls the relative weight of the two terms.
%All of the following considerations also hold true for the PDE constraint \cref{eq:PDE2} instead of \cref{eq:PDE} if we choose $V = A$.
For the following considerations we require these assumptions on $K$ and $D$:
\begin{enumerate}
	\item The operator $K$ is continuous, $D(\cdot, g)$ is lower semi-continuous for each $g \in Y$, and $D(g, \cdot)$ is continuous for each $g \in Y$.
	\item  If $f_n,g_n$ are two convergent sequences with limits $f$ and $g$, respectively,
	then $D$ must satisfy $\liminf_{n \to \infty} D(f_n,g) \leq \liminf_{n \to \infty} D(f_n,g_n)$.
	\item If $D(f,g)=0$, then $f=g$.
\end{enumerate}
Note that the requirements on $D$ are satisfied if $D$ is a metric.
The obtained results are along the lines of \cite{CheOkt18} but are adapted to our setting and to our notation.
For simplicity we stick to the notation of the continuity equation, but want to mention that the same derivations hold for the transport equation with coefficients in the set $A$.
First, we prove that a minimiser of the problem exists.
\begin{proposition}[Existence]
	For every $f_0 \in L^2(\domain, \R)$, the functional $J_{\gamma, g}$ defined in \cref{eq:fun_to_min2} has a minimiser.
\end{proposition}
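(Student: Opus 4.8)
The plan is to run the direct method of the calculus of variations. First I would record that $J_{\gamma,g}\ge 0$ and that the infimum is finite: taking $v=0$, the weak solution of $C(v)$ at time $T$ is simply the template $f_0$ (a vanishing velocity leaves $f_0$ unchanged in both the continuity and the transport equation), so $J_{\gamma,g}(0)=D(K(f_0),g)<\infty$ since $D$ is $\R_{\ge 0}$-valued. Hence $m\coloneqq\inf_{v\in V}J_{\gamma,g}(v)$ is finite and we may fix a minimising sequence $(v_n)_n\subset V$ with $J_{\gamma,g}(v_n)\to m$.

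Next I would extract a weak limit. Since $D\ge 0$, we have $\gamma\Vert v_n\Vert_V^2\le J_{\gamma,g}(v_n)$, which is bounded in $n$, so $(v_n)_n$ is bounded in $V$. As $V$ is reflexive, there is a subsequence---not relabelled---with $v_n\rightharpoonup v^\ast$ in $V$. In the transport case one argues instead in $H^1([0,T]\times\Omega)^n$ and uses that the admissible set $A$ from \cref{eq:seta} is weakly closed, as already checked above, so that the weak limit again lies in $A$.

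Then I would pass to the limit term by term. Applying \cref{prop:stability_sol} at the fixed time $t=T$ (respectively its transport-equation analogue, the corollary to the existence-and-stability theorem of \cref{sec:background}), a further subsequence of $(v_n)_n$ satisfies $f_{v_n}(T,\cdot)\to f_{v^\ast}(T,\cdot)$ in $L^2(\domain,\R)$. Continuity of $K$ then gives $K(f_{v_n}(T,\cdot))\to K(f_{v^\ast}(T,\cdot))$ in $Y$, and lower semicontinuity of $D(\cdot,g)$ yields
\[
	D\bigl(K(f_{v^\ast}(T,\cdot)),g\bigr)\le\liminf_{n\to\infty}D\bigl(K(f_{v_n}(T,\cdot)),g\bigr).
\]
Combining this with weak lower semicontinuity of the norm, $\Vert v^\ast\Vert_V^2\le\liminf_n\Vert v_n\Vert_V^2$, and adding the two estimates gives $J_{\gamma,g}(v^\ast)\le\liminf_n J_{\gamma,g}(v_n)=m$. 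Since $v^\ast\in V$, we conclude $J_{\gamma,g}(v^\ast)=m$, i.e.\ $v^\ast$ is a minimiser.

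The only bookkeeping point worth noting is that \cref{prop:stability_sol} delivers convergence of $f_{v_n}(T,\cdot)$ only along a subsequence; this is harmless, because every subsequence of a minimising sequence is again minimising, so the final inequality still compares $J_{\gamma,g}(v^\ast)$ with $m$. The substantive input---that weak convergence of the velocity fields forces strong $L^2$ convergence of the transported template at time $T$---has already been established in \cref{prop:stability_sol} and its transport analogue, so no real analytic obstacle remains; everything else (boundedness, reflexivity, weak lower semicontinuity of the norm, continuity of $K$, and lower semicontinuity of $D$) is routine.
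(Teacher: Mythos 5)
Your proposal is correct and follows essentially the same route as the paper: a minimising sequence, boundedness from the regularisation term, weak compactness, strong $L^2$ convergence of $f_{v_n}(T,\cdot)$ via the stability result (\cref{prop:stability_sol}, respectively its transport-equation analogue), continuity of $K$, lower semicontinuity of $D(\cdot,g)$, and weak lower semicontinuity of the norm. Your additional remarks on the finiteness of the infimum, the weak closedness of $A$ in the transport case, and the harmlessness of passing to subsequences are details the paper leaves implicit, but the argument is the same.
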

\begin{proof}
	The idea of the proof is to construct a minimising sequence which is weakly convergent and then use that the functional is weakly lower semi-continuous. Let us consider a sequence $v_n$ such that $J_{\gamma, g} (v_n)$ converges to $\inf_v J_{\gamma, g}(v)$. By construction of the functional, $v_n$ is bounded in $L^2([0,T], \mathcal{V})$ and hence there exists a subsequence, also denoted with $v_n$, such that $v_n \rightharpoonup v_\infty$.
	By \cref{prop:stability_sol}, there exists a subsequence, also denoted with $v_n$, such that $f_{v_n}(T,\cdot) \to f_{v_\infty}(T,\cdot)$ in $L^2(\domain, \R)$.
	With this at hand, we are able to prove weak lower semi-continuity of the data term.
	Indeed, as $K$ is continuous, from $f_{v_n}(T,\cdot) \to f_{v_\infty}(T,\cdot)$
	we get $K(f_{v_n}(T,\cdot)) \to K(f_{v_\infty}(T,\cdot))$.
	Since $D(\cdot, g)$ is lower semi-continuous, we obtain that 
	$D(K(f_{v_\infty}(T,\cdot)),g) \leq \liminf_{n \to \infty} D(K(f_{v_n}(T,\cdot)),g)$.
	This concludes the proof, since the whole functional is (weakly) lower semi-continuous, and hence $J_{\gamma,g}(v_\infty) \leq \inf_v J_{\gamma, g}(v)$.
\end{proof}
Next, we state a stability result.
\begin{proposition}[Stability]
	Let $f_0 \in L^2(\domain, \R)$ and $\gamma>0$.
	Let $g_n$ be a sequence in $Y$ converging to $g \in Y$.
	For each $n$, we choose $v_n$ as minimiser of $J_{\gamma, g_n}$.
	Then, there exists a subsequence of $v_n$ which weakly converges towards a minimiser $v$ of $J_{\gamma, g}$.
\end{proposition}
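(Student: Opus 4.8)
The plan is to mirror the existence proof together with the standard stability argument for variational regularisation. The first step is a uniform a priori bound on the minimisers. Since each $v_n$ minimises $J_{\gamma, g_n}$, comparing against the competitor given by the zero velocity field — for which $C(0)$ has the stationary solution $f_0$, so that $J_{\gamma, g_n}(0)=D(K(f_0),g_n)$ — yields
\[
\gamma \Vert v_n \Vert_V^2 \le J_{\gamma, g_n}(v_n) \le J_{\gamma, g_n}(0) = D\bigl(K(f_0), g_n\bigr).
\]
As $g_n \to g$ and $D\bigl(K(f_0),\,\cdot\,\bigr)$ is continuous by the first assumption, the right-hand side is bounded; hence $(v_n)$ is bounded in $L^2([0,T],\mathcal V)$ and, by reflexivity, a subsequence — not relabelled — satisfies $v_n \rightharpoonup v$. (In the transport case one additionally invokes weak closedness of $A$ to keep $v \in A$.)

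Next I would transfer this to the states: \cref{prop:stability_sol} applied at $t=T$ gives a further subsequence with $f_{v_n}(T,\cdot)\to f_v(T,\cdot)$ in $L^2(\domain,\R)$, and continuity of $K$ then gives $K(f_{v_n}(T,\cdot))\to K(f_v(T,\cdot))$ in $Y$. It remains to show $v$ minimises $J_{\gamma,g}$. Fix an arbitrary competitor $\tilde v$. Continuity of $D$ in its second argument gives $J_{\gamma,g_n}(\tilde v)\to J_{\gamma,g}(\tilde v)$. For the lower bound on the minimisers, weak lower semicontinuity of $\Vert\,\cdot\,\Vert_V^2$ handles the regulariser, while the second assumption, applied to $K(f_{v_n}(T,\cdot))\to K(f_v(T,\cdot))$ and $g_n\to g$, together with lower semicontinuity of $D(\,\cdot\,,g)$, gives
\[
D\bigl(K(f_v(T,\cdot)),g\bigr) \le \liminf_{n\to\infty} D\bigl(K(f_{v_n}(T,\cdot)),g\bigr) \le \liminf_{n\to\infty} D\bigl(K(f_{v_n}(T,\cdot)),g_n\bigr).
\]
Adding these two liminf estimates and using superadditivity of $\liminf$ (all terms are nonnegative) yields $J_{\gamma,g}(v)\le\liminf_n J_{\gamma,g_n}(v_n)$.

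Chaining the inequalities, for every competitor $\tilde v$,
\[
J_{\gamma,g}(v) \le \liminf_{n\to\infty} J_{\gamma,g_n}(v_n) \le \liminf_{n\to\infty} J_{\gamma,g_n}(\tilde v) = J_{\gamma,g}(\tilde v),
\]
where the middle inequality is optimality of $v_n$ for $g_n$. Hence $v$ is a minimiser of $J_{\gamma,g}$.

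The step I expect to be the main obstacle is the lower estimate for the data term: the state and the data vary simultaneously, so ordinary joint lower semicontinuity of $D$ does not suffice, and this is exactly the point that forces the second assumption on $D$ (the $\liminf$-compatibility condition). A subsidiary nuisance is the bookkeeping of nested subsequences — first from the a priori bound, then from \cref{prop:stability_sol} at $t=T$ (which itself extracts a $C^1$-convergent subsequence of the flows) — which all have to be threaded consistently so that a single subsequence realises every limit used above.
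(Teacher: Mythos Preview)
Your proof is correct and follows essentially the same route as the paper's: compare against the zero velocity to bound $\Vert v_n\Vert_V$, extract a weak limit, pass to the states via \cref{prop:stability_sol}, and combine weak lower semicontinuity of the norm with the $\liminf$-compatibility assumption on $D$ to obtain $J_{\gamma,g}(v)\le\liminf_n J_{\gamma,g_n}(v_n)\le\liminf_n J_{\gamma,g_n}(\tilde v)=J_{\gamma,g}(\tilde v)$. Your remarks on the transport case (weak closedness of $A$) and on the subsequence bookkeeping are accurate refinements of what the paper leaves implicit.
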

\begin{proof}
	By the properties of $D$ it holds, for every $n$, that
	\[\Vert v_n \Vert_V^2 \leq \frac{1}{\gamma} J_{\gamma, g_n} (v_n) \leq \frac{1}{\gamma} J_{\gamma, g_n} (0) = \frac{1}{\gamma} D(K(f_0), g_n) \rightarrow \frac{1}{\gamma} D(K(f_0), g) < \infty.\]
	Hence, $v_n$ is bounded in $L^2([0,T], \mathcal V)$ and there exists a subsequence, also denoted with $v_n$, such that $v_n \rightharpoonup v$.
	From the weak convergence we obtain
	$\gamma \Vert v \Vert_V^2 \leq \gamma \liminf_{n \to \infty} \Vert v_n \Vert_V^2$. 
	
	By passing to a subsequence and by using \cref{prop:stability_sol}, we deduce that $f_{v_n}(T,\cdot) \to f_{v}(T,\cdot)$.
	Together with the convergence of $g_n$ and the convergence property of $D$ this implies
	\[D(K(f_{v}(T,\cdot)), g) \leq \liminf_{n\to \infty} D(K(f_{v_n}(T,\cdot)), g) \leq \liminf_{n\to \infty} D(K(f_{v_n}(T,\cdot)), g_n).\]
	Thus, for any $\tilde v$, it holds that
	\[J_{\gamma, g} (v) \leq \liminf_{n \to \infty} \gamma \Vert v_n\Vert_V^2  + D(K(f_{v_n}(T,\cdot)), g_n) = \liminf_{n \to \infty} J_{\gamma, g_n} (v_n) \leq \liminf_{n \to \infty} J_{\gamma, g_n} (\tilde v),\]
	because $v_n$ minimises $J_{\gamma, g_n}$.
	Then, as $J_{\gamma, g_n} (\tilde v)$ converges to $J_{\gamma,g} (\tilde v)$ by the assumptions on $D$, we deduce $J_{\gamma, g} (v) \leq J_{\gamma,g} (\tilde v)$ and hence that $v$ minimises $J_{\gamma, g}$.
\end{proof}
Finally, we state a convergence result for the method.
\begin{proposition}[Convergence]
	Let $f_0 \in L^2(\domain, \R)$ and $g \in Y$, and suppose that there exists $\hat{v} \in L^2([0,T], \mathcal V)$ such that $K(f_{\hat v}(T,\cdot)) = g$.
	Further, assume that $\gamma : \R_{>0} \mapsto \R_{>0}$ satisfies $\gamma(\delta) \to 0$ and $\frac{\delta}{\gamma(\delta)} \to 0$ as $\delta \to 0$. 
	Now let $\delta_n$ be a sequence of positive numbers converging to 0 and assume that $g_n$ is a data sequence satisfying $D(g,g_n) \leq \delta_n$ for each $n$.
	Let $v_n$ be a minimiser of $J_{\gamma_n, g_n}$, where $\gamma_n = \gamma(\delta_n)$.
	Then, there exists a subsequence of $v_n$ which weakly converges towards an element $v$ such that
	$K(f_{v}(T,\cdot)) = g$.
\end{proposition}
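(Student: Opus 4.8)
The plan is to follow the classical convergence argument for Tikhonov-type regularisation: exploit the minimality of $v_n$ by testing $J_{\gamma_n,g_n}$ against the exact solution $\hat v$, extract a priori bounds, pass to a weak limit with the help of the stability result \cref{prop:stability_sol}, and finally identify the limit using the three structural properties of $D$.

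First I would insert $\hat v$ into the functional. Since $v_n$ minimises $J_{\gamma_n,g_n}$ and $K(f_{\hat v}(T,\cdot))=g$,
\begin{equation*}
D\bigl(K(f_{v_n}(T,\cdot)),g_n\bigr)+\gamma_n\Vert v_n\Vert_V^2 \le J_{\gamma_n,g_n}(\hat v)=D(g,g_n)+\gamma_n\Vert\hat v\Vert_V^2\le\delta_n+\gamma_n\Vert\hat v\Vert_V^2 .
\end{equation*}
Two consequences follow: dividing by $\gamma_n$ gives $\Vert v_n\Vert_V^2\le \delta_n/\gamma_n+\Vert\hat v\Vert_V^2$, which is bounded because $\delta_n/\gamma_n\to 0$; and, as the first summand is nonnegative, $D(K(f_{v_n}(T,\cdot)),g_n)\le\delta_n+\gamma_n\Vert\hat v\Vert_V^2\to 0$ since $\delta_n,\gamma_n\to 0$.

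Second, boundedness of $v_n$ in $L^2([0,T],\mathcal V)$ yields, exactly as in the proof of existence, a subsequence (not relabelled) with $v_n\rightharpoonup v$. Applying \cref{prop:stability_sol} at the fixed final time $T$, I pass to a further subsequence along which $f_{v_n}(T,\cdot)\to f_v(T,\cdot)$ in $L^2(\Omega,\R)$, and continuity of $K$ then gives $K(f_{v_n}(T,\cdot))\to K(f_v(T,\cdot))$ in $Y$. The order of these extractions matters, since \cref{prop:stability_sol} only provides convergence along a subsequence and only at the prescribed time.

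It remains to identify the limit. Lower semi-continuity of $D(\cdot,g)$ gives $D(K(f_v(T,\cdot)),g)\le\liminf_n D(K(f_{v_n}(T,\cdot)),g)$. Combining the convergence $K(f_{v_n}(T,\cdot))\to K(f_v(T,\cdot))$ with property~2 of $D$ and the data convergence $g_n\to g$ (which the noise model $D(g,g_n)\le\delta_n$ supplies in the cases of interest, in particular whenever $D$ is a metric, via the triangle inequality) yields $\liminf_n D(K(f_{v_n}(T,\cdot)),g)\le\liminf_n D(K(f_{v_n}(T,\cdot)),g_n)=0$. Hence $D(K(f_v(T,\cdot)),g)=0$, and property~3 forces $K(f_v(T,\cdot))=g$. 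I expect the delicate point to be precisely this final step: with only the three assumptions on $D$ rather than a genuine metric, the discrepancy has to be passed to the limit through property~2 together with the convergence of the data, instead of through a naive triangle inequality, and one must be careful that the stability result has been invoked along the correct subsequence.
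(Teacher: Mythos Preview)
Your proof is correct and follows essentially the same approach as the paper's: test against $\hat v$ to obtain the a priori bound, extract a weakly convergent subsequence, invoke \cref{prop:stability_sol}, and use the structural assumptions on $D$ to conclude. You are in fact more explicit than the paper about the need for $g_n\to g$ in $Y$ when invoking property~2; the paper simply appeals to ``the same arguments as in the previous proposition'' at that point.
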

\begin{proof}
	For every $n$, it holds that
	\begin{equation}
		\Vert v_n \Vert_V^2 \leq \frac{1}{\gamma_n} J_{\gamma_n, g_n} (v_n) \leq  \frac{1}{\gamma_n} J_{\gamma_n, g_n} (\hat{v}) =  \frac{1}{\gamma_n}  \bigr(D(g, g_n) + \gamma_n \Vert \hat{v} \Vert_V^2 \bigl)
		\leq \frac{\delta_n}{\gamma_n} + \Vert \hat{v} \Vert_V^2.
	\end{equation}
	From the requirements on $\gamma$ and $\delta$ we deduce that $v_n$ is bounded in $L^2 ([0,T], \mathcal V)$ and then that up to an extraction, $v_n$ weakly
	converges to some $v$ in $L^2 ([0,T], \mathcal V)$.
	
	Further, it holds $D(K(f_{v}(T,\cdot)), g) \leq \liminf_{n \to \infty} D(K(f_{v_n}(T,\cdot)), g_n)$ with the same arguments as in the previous proposition. Finally, for every $n$, it holds that
	\begin{equation}
		D(K(f_{v_n}(T,\cdot)), g_n) \leq J_{\gamma_n, g_n} (v_n) \leq J_{\gamma_n, g_n} (\hat{v}) = D(g, g_n) + \gamma_n \Vert \hat{v}\Vert_V^2,
	\end{equation}
	where the two rightmost terms both converge to zero.
	Thus, $K(f_{v}(T,\cdot)) = g$ by the assumptions on $D$.
\end{proof}
We conclude with a remark on data discrepancy functionals that satisfy the conditions and will be used in our numerical experiments in \cref{sec:experiments}.
\begin{remark}
	We now assume that the data space $Y$ is a real Hilbert space.
	Clearly, the conditions are satisfied if $D_{\mathrm{SSD}}(f,g) = \Vert f - g\Vert_{Y}^2$.
	We will only check the convergence condition.
	It holds
	\[\liminf_{n \to \infty}\Vert f_n - g\Vert_{Y}^2 = \liminf_{n \to \infty}\Vert f_n - g_n\Vert_{Y}^2 +2 \langle f_n - g_n,g_n-g \rangle + \Vert g - g_n\Vert_{Y}^2,\]
	where the last two terms converge to zero since convergent sequences are bounded.

	Another function that satisfies the conditions is $D_{\mathrm{NCC}} \colon Y\setminus\{0\} \times Y\setminus\{0\} \to [0,1]$ with
	\[D_{\mathrm{NCC}}(f,g) = 1 - \frac{\langle f,g \rangle^2}{\Vert f \Vert_{Y}^2 \Vert g \Vert_{Y}^2},\]
	which is based on the normalised cross correlation. First, note that $\tilde D(\cdot,g) = \frac{\langle \cdot,g \rangle^2}{\Vert g \Vert_{Y}^2}$ and the function $\Vert \cdot \Vert_{Y}^{-2}$ are continuous. Thus, we get that $D_{\mathrm{NCC}}(\cdot,g)$ is continuous.
	By symmetry, this also holds for $D_{\mathrm{NCC}}(g, \cdot)$.
	It remains to check the convergence property:
	\begin{align}
		\lim_{n \to \infty} 1 - D_{\mathrm{NCC}}(f_n,g) &= \lim_{n \to \infty} \frac{\left( \langle f_n,g -g_n \rangle + \langle f_n,g_n \rangle \right)^2}{\Vert f_n \Vert_{Y}^2 \Vert g \Vert_{Y}^2} = \lim_{n \to \infty} \frac{\langle f_n,g_n \rangle^2}{\Vert f_n \Vert_{Y}^2 \Vert g \Vert_{Y}^2}\\
		&= \lim_{n \to \infty} \frac{\langle f_n,g_n \rangle^2}{\Vert f_n \Vert_{Y}^2 \Vert g_n \Vert_{Y}^2} = \lim_{n \to \infty} 1 - D_{\mathrm{NCC}}(f_n,g_n).
	\end{align}
	From this we conclude $\liminf_{n \to \infty} D_{\mathrm{NCC}}(f_n,g) = \liminf_{n \to \infty} D_{\mathrm{NCC}}(f_n,g_n)$.
	Unfortunately, $D_{\mathrm{NCC}}(f,g) =0$ only implies $f = c g$, with $c \in \R$.
\end{remark}

\section{Numerical solution}

The focus of this section is to approximately solve problem \cref{eq:fun_to_min}.
Our approach is based on the Lagrangian methods developed in~\cite{ManRut17} and the inexact multilevel Gauss--Newton method used in~\cite{Mod09}.
Both methods and their necessary modifications are briefly outlined here.

As customary in PDE-constrained optimisation \cite[Chap.~3]{DeL15}, we eliminate the state equation by defining a \emph{control-to-state} operator, which parametrises the final state $f_{v}(T, \cdot)$ in terms of the unknown velocities $v$.
With a slight abuse of notation, we define this solution map as
\begin{equation}
\begin{aligned}
	S\colon V & \to L^2(\Omega, \R), \\
	v & \mapsto f_{v}(T, \cdot) \eqqcolon f(v).
\end{aligned}
\label{eq:solutionmap}
\end{equation}
Here, $f_{v}$ denotes the unique solution to either the transport or the continuity equation, as defined in \cref{sec:background}.
As a result, we obtain the reduced form of \cref{eq:fun_to_min}:
\begin{equation}
	\min_{v \in V} \, D(K(f(v)), g) + \gamma R(v).
\label{eq:reducedfunctional}
\end{equation}
Here, $R\colon V \to \R_{\ge 0}$ is a regularisation functional that can be written as
\begin{equation}
	R(v) = \frac{1}{2} \int_{0}^{T} \int_{\Omega} \lVert Bv(t, x) \rVert^{2} \dint x \dint t
\label{eq:regfunctional}
\end{equation}
with $B$ denoting a linear (vectorial) differential operator.

In this work, we consider the operators $B = \nabla_{x}$ and $B = \Delta_{x}$, which are also used in \cite{ManRut17}.
We refer to the resulting functionals $R$ as \emph{diffusion} and \emph{curvature regularisation} functionals, respectively.
Note that $B$ can as well be chosen to incorporate derivatives with respect to time.

Amongst the operators above, we also consider a regularisation functional that resembles the norm of the space $V = L^{2}([0, T], H_{0}^{3}(\Omega, \R^{n}))$.
This particular choice is motivated by the fact that, for $n = \{2, 3\}$, the space $H_{0}^{3}(\Omega, \R^{n})$ can be continuously embedded in $C_{0}^{1, \alpha}(\Omega, \R^{n})$, for some $\alpha > 0$, so that the results in \cref{sec:background} hold.
The norm of $V$ is given by
\begin{equation}
	\lVert v \rVert_{V}^{2} = \frac{1}{2} \int_{0}^{T} \lVert v(t, \cdot) \rVert_{L^{2}(\Omega, \R^{n})}^{2} \, \dint t + \frac{1}{2} \int_{0}^{T} \lvert v(t, \cdot) \rvert_{H^{3}(\Omega, \R^{n})}^{2} \, \dint t.
\label{eq:normv}
\end{equation}
Here, $\lvert \cdot \rvert_{H^{k}(\Omega, \R^{n})}$ denotes the usual $H^{k}$-seminorm including only the highest-order partial derivatives.
By the Gagliardo--Nirenberg inequality, \cref{eq:normv} is equivalent to the usual norm of $L^{2}([0, T], H_{0}^{3}(\Omega, \R^{n}))$.
To simplify numerical optimisation, we omit the requirement that $v$ is compactly supported in $\Omega$ and minimise over $L^{2}([0, T], H^{3}(\Omega, \R^{n}))$.

In order to solve problem \cref{eq:reducedfunctional}, we follow a discretise-then-optimise strategy.
Without loss of generality, we assume that the domain is $\Omega = (0, 1)^{n}$.
We partition it into a regular grid consisting of $m^{n}$ equally sized cells of edge length $h_{X} = 1 / m$ in every coordinate direction.

The template image $f_{0} \in L^2(\Omega, \R)$ is assumed to be sampled at cell-centred locations $\mathbf{x}_{c} \in \R^{m^{n}}$, giving rise to its discrete version $\mathbf{f}_{0}(\mathbf{x}_{c}) \in \R^{m^{n}}$.
The template image is interpolated on the cell-centred grid by means of cubic B-spline interpolation as outlined in \cite[Chap.~3.4]{Mod09}.

Similarly, the time domain is assumed to be $[0, 1]$ and is partitioned into $m_{t}$ equally sized cells of length $h_{t} = 1 / m_{t}$.
We assume that the unknown velocities $v\colon [0, 1] \times \Omega \to \R^{n}$ are sampled at cell-centred locations in space as well as at cell-centred locations in time, leading to a vector of unknowns $\mathbf{v} \in \R^{N}$, where $N = (m_{t} + 1) \cdot n \cdot m^{n}$ is the total number of unknowns of the finite-dimensional minimisation problem.

\paragraph{Lagrangian solver}

In order to compute the solution map $f(v)$ numerically, i.e.~to solve the hyperbolic PDEs \cref{eq:PDE} and \cref{eq:PDE2}, the Lagrangian solver in~\cite{ManRut17} follows a two-step approach.
First, given a vector $\mathbf{v} \in \R^{N}$ of velocities, the ODE \cref{eq:ODE} is solved approximately using a fourth-order Runge--Kutta (RK4) method with $N_{t}$ equally spaced time steps of size $\Delta t$.
For simplicity, we follow the presentation in~\cite{ManRut17} based on an explicit first-order Euler method and refer to \cite[Sec.~3.1]{ManRut17} for the full details.

Given initial points $\mathbf{x} \in \R^{m^{n}}$ and velocities $\mathbf{v} \in \R^{N}$, an approximation $\mathbf{X}_{\mathbf{v}}\colon [0, 1]^{2} \times \R^{m^{n}} \to \R^{m^{n}}$ of the solution $X_{v}$ is given recursively by
\begin{equation}
	\mathbf{X}_{\mathbf{v}}(0, t_{k + 1}, \mathbf{x}) = \mathbf{X}_{\mathbf{v}}(0, t_{k}, \mathbf{x}) + \Delta t \, \mathbf{I}(\mathbf{v}, t_{k}, \mathbf{X}_{\mathbf{v}}(0, t_{k}, \mathbf{x})),
\label{eq:odesolve}
\end{equation}
for all $k = 0, 1, \dots, N_{t} - 1$, with initial condition $\mathbf{X}_{\mathbf{v}}(0, 0, \mathbf{x}) = \mathbf{x}$.
Here, $\mathbf{I}(\mathbf{v}, t_{k}, \mathbf{X}_{\mathbf{v}}(0, t_{k}, \mathbf{x}))$ denotes a componentwise interpolation of $\mathbf{v}$ at time $t_{k} = k \Delta t$ and at the points $\mathbf{X}_{\mathbf{v}}(0, t_{k}, \mathbf{x})$.
Note that, since the characteristic curves for both PDEs coincide, this step is identical regardless of which PDE we impose.

The second step computes approximate intensities of the final state $f_{v}(1, \cdot)$.
This step depends on the particular PDE.
For the transport equation, in order to compute the intensities at the grid points $\mathbf{x}_{c}$, we follow characteristic curves backwards in time, which is achieved by setting $\Delta t = - 1 / N_{t}$ in \cref{eq:odesolve}.
The deformed template is then given by
\begin{equation}
	\mathbf{f}(\mathbf{v}) = \mathbf{f}_{0}(\mathbf{X}_{\mathbf{v}}(1, 0, \mathbf{x}_{c})),
\label{eq:solmaptransp}
\end{equation}
where $\mathbf{f}_{0} \in \R^{m^{n}}$ is the interpolation of the discrete template image.

For the continuity equation,~\cite{ManRut17} proposes to use a \emph{particle-in-cell (PIC)} method, see \cite{CheKur06} for details.
The density of particles which are initially located at grid points $\mathbf{x}_{c}$ is represented by a linear combination of basis functions, which are then shifted by following the characteristics computed in the first step.
To determine the final density at grid points, exact integration over the grid cells is performed.
By setting $\Delta t = 1 / N_{t}$ in \cref{eq:odesolve}, the transformed template can be computed as
\begin{equation}
	\mathbf{f}(\mathbf{v}) = \mathbf{F}(\mathbf{X}_{\mathbf{v}}(0, 1, \mathbf{x}_{c})) \mathbf{f}_{0}(\mathbf{x}_{c}),
\label{eq:solmapcont}
\end{equation}
where $\mathbf{F} \in \R^{N \times N}$ is the pushforward matrix that computes the integrals over the shifted basis functions.
See \cite[Sec.~3.1]{ManRut17} for its detailed specification using linear, compactly supported basis functions.
By design, the method is mass-preserving at the discrete level.

\paragraph{Numerical optimisation}

Let us denote by $\mathbf{K}\colon \R^{N} \to \R^{M}$, $M \in \N$, a finite-dimensional, Fr\'{e}chet differentiable approximation of the (not necessarily linear) operator $K\colon L^2(\Omega, \R) \to Y$.
With the application to CT in mind, we will outline a discretisation of \cref{eq:reducedfunctional} suitable for the $n$-dimensional Radon transform, which maps a function on $\R^{n}$ into the set of its integrals over the hyperplanes in $\R^{n}$ \cite[Chap.~2]{Nat01}.

An element $K(f(v)) \in Y$ is a function on the unit cylinder $S^{n - 1} \times \R$ of $\R^{n + 1}$, where $S^{n - 1}$ is the $(n - 1)$-dimensional unit sphere.
We discretise this unit cylinder as follows.
First, we sample $p \in \N$ directions from $S^{n - 1}$.
When $n = 2$, as it is the case in our experiments in \cref{sec:experiments}, directions are parametrised by angles from the interval $[0, 180]$ degrees.
For simplicity, we say (slightly imprecise) that we take one measurement in each direction.
Second, similarly to the sampling of $\Omega$, we use an interval $(0, 1)$ instead of $\R$ and partition it into $q$ equally sized cells of length $h_{Y} = 1 / q$.
Depending on $n$ and the diameter of $\Omega$, the interval length may require adjustment. 
Each measurement $i$ is then sampled at cell-centred points $\mathbf{y}_{c} \in \R^{q}$ and denoted by $\mathbf{g}_{i}(\mathbf{y}_{c}) \in \R^{q}$.
All measurements are then concatenated into a vector $\mathbf{g} \coloneqq \mathbf{g}(\mathbf{y}_{c}) \in \R^{M}$, where $M = p \cdot q$.

The finite-dimensional optimisation problem in abstract form is then given by
\begin{equation}
	\min_{\mathbf{v} \in \R^{N}} \, \{ J_{\gamma, \mathbf{g}}(\mathbf{v}) \coloneqq D(\mathbf{K}(\mathbf{f}(\mathbf{v})), \mathbf{g}) + \gamma R(\mathbf{v}) \},
\label{eq:discreteproblem}
\end{equation}
where $D$ and $R$ are chosen to be discretisations of a distance and of \cref{eq:regfunctional}, respectively.

In further consequence, we approximate integrals using a midpoint quadrature rule.
As we are mainly interested in the setting where only few directions are given, we disregard integration over the unit sphere.
For vectors $\mathbf{x}, \mathbf{y} \in \R^{M}$, the corresponding approximations of the distance based on sum-of-squared-differences and the normalised cross correlation-based distance are then
\begin{equation}
	D_{\mathrm{SSD}}(\mathbf{x}, \mathbf{y}) \approx \frac{h_{Y}}{2} (\mathbf{x} - \mathbf{y})^{\top}(\mathbf{x} - \mathbf{y}) \quad \text{and} \quad D_{\mathrm{NCC}}(\mathbf{x}, \mathbf{y}) \approx 1 - \frac{(\mathbf{x}^{\top}\mathbf{y})^{2}}{\Vert \mathbf{x} \Vert^{2} \Vert \mathbf{y} \Vert^{2}},
\label{eq:distances}
\end{equation}
respectively.
See \cite[Chaps.~6.2 and 7.2]{Mod09} for details.
Note that, due to cancellation, no (spatial) discretisation parameter occurs in the approximation of the NCC above.

Moreover, we approximate the regularisation functional in \cref{eq:regfunctional} with
\begin{equation}
	R(\mathbf{v}) \approx \frac{h_{t} h_{X}^{n}}{2} \mathbf{v}^{\top} \mathbf{B}^{\top} \mathbf{B} \mathbf{v},
\label{eq:normvapprox}
\end{equation}
where $\mathbf{B} \in \R^{N \times N}$ is a finite-difference discretisation of the differential operator in \cref{eq:regfunctional}, analogous to \cite[Chap.~8.5]{Mod03}.
In our implementation we use zero Neumann boundary conditions and pad the spatial domain to mitigate boundary effects arising from the discretisation of the operator.

In order to apply (inexact) Gauss--Newton optimisation to problem \cref{eq:discreteproblem}, we require first- and (approximate) second-order derivatives of $J_{\gamma, \mathbf{g}}(\mathbf{v})$.
By application of the chain rule, we obtain
\begin{equation*}
	\frac{\partial}{\partial \mathbf{v}} J_{\gamma, \mathbf{g}}(\mathbf{v}) = \frac{\partial}{\partial \mathbf{v}} \mathbf{f}(\mathbf{v})^{\top} \frac{\partial}{\partial \mathbf{f}} \mathbf{K}(\mathbf{f}(\mathbf{v}))^{\top} \frac{\partial}{\partial \mathbf{x}} D(\mathbf{K}(\mathbf{f}(\mathbf{v})), \mathbf{g}) + \gamma \frac{\partial}{\partial \mathbf{v}} R(\mathbf{v}),
\end{equation*}
where $\partial \mathbf{K} / \partial \mathbf{f}$ is the Fr\'{e}chet derivative of $\mathbf{K}$ and $\partial \mathbf{f}(\mathbf{v}) / \partial \mathbf{v}$ is the derivative of the solution map \cref{eq:solutionmap} with respect to the velocities, which is given below.

The partial derivatives of the distance functions \cref{eq:distances} with respect to its first argument are given by
\begin{equation}
	\frac{\partial}{\partial \mathbf{x}} D_{\mathrm{SSD}}(\mathbf{x}, \mathbf{y}) = h_{Y}(\mathbf{x} - \mathbf{y}) \quad \text{and} \quad \frac{\partial^{2}}{\partial \mathbf{x}^{2}} D_{\mathrm{SSD}}(\mathbf{x}, \mathbf{y}) = h_{Y} \mathbf{I}_{N},
\label{eq:ssdderiv}
\end{equation}
where $\mathbf{I}_{N} \in \R^{N \times N}$ is the identity matrix of size $N$, and
\begin{equation*}
	\frac{\partial}{\partial \mathbf{x}} D_{\mathrm{NCC}}(\mathbf{x}, \mathbf{y}) = - \frac{2 (\mathbf{x}^{\top} \mathbf{y}) \mathbf{y}}{\lVert \mathbf{x} \rVert^{2} \lVert \mathbf{y} \rVert^{2}} + \frac{2 (\mathbf{x}^{\top} \mathbf{y})^{2} \mathbf{x}}{\lVert \mathbf{x} \rVert^{4} \lVert \mathbf{y} \rVert^{2}}
\end{equation*}
respectively.
Moreover, the derivatives of \cref{eq:normvapprox} are given by
\begin{equation*}
	\frac{\partial}{\partial \mathbf{v}} R(\mathbf{v}) = h_{t} h_{X}^{n} \mathbf{B}^{\top} \mathbf{B} \mathbf{v} \quad \text{and} \quad \frac{\partial^{2}}{\partial \mathbf{v}^{2}} R(\mathbf{v}) = h_{t} h_{X}^{n} \mathbf{B}^{\top} \mathbf{B}.
\end{equation*}

In order to obtain an efficient iterative second-order method for solving \cref{eq:discreteproblem}, one requires an approximation of the Hessian $\mathbf{H} \in \R^{N \times N}$ that balances the following tradeoff.
Ideally, it is reasonably efficient to compute, consumes limited memory (sparsity is desired), and has sufficient structure so that preconditioning can be used.
However, each iteration of the Gauss--Newton method should also provide a suitable descent direction.
For these reasons, we approximate the Hessian by
\begin{multline*}
	\mathbf{H}(\mathbf{v}) = \frac{\partial^{2}}{\partial \mathbf{v}^{2}} J_{\gamma, \mathbf{g}}(\mathbf{v}) \approx \frac{\partial}{\partial \mathbf{v}} \mathbf{f}(\mathbf{v})^{\top} \frac{\partial}{\partial \mathbf{f}} \mathbf{K}(\mathbf{f}(\mathbf{v}))^{\top} \frac{\partial^{2}}{\partial \mathbf{x}^{2}} D(\mathbf{K}(\mathbf{f}(\mathbf{v})), \mathbf{g}) \frac{\partial}{\partial \mathbf{f}} \mathbf{K}(\mathbf{f}(\mathbf{v})) \frac{\partial}{\partial \mathbf{v}} \mathbf{f}(\mathbf{v}) \\
	\qquad + \gamma h_{t} h_{X}^{n} \mathbf{B}^{\top} \mathbf{B} + \epsilon \mathbf{I}_{N},
\end{multline*}
where $\epsilon > 0$ ensures positive semidefiniteness.
For simplicity, the term involving $\partial^{2} \mathbf{f}(\mathbf{v}) / \partial \mathbf{v}^{2}$ is omitted and, regardless of the chosen distance, we use the second derivative in \cref{eq:ssdderiv} as an approximation of $\partial^{2} D(\mathbf{x}, \mathbf{y}) / \partial \mathbf{x}^{2}$.
In our numerical experiments we found that this choice works well for the problem considered in \cref{sec:experiments}.

It remains to discuss the derivative of the solution map.
For the transport equation, the application of the chain rule to \cref{eq:solmaptransp} yields
\begin{equation*}
	\frac{\partial}{\partial \mathbf{v}} \mathbf{f}(\mathbf{v}) = \nabla_{x} \mathbf{f}_{0}(\mathbf{X}_{\mathbf{v}}(1, 0, \mathbf{x}_{c})) \frac{\partial}{\partial \mathbf{v}} \mathbf{X}_{\mathbf{v}}(1, 0, \mathbf{x}_{c}),
\end{equation*}
where $\nabla_{x} \mathbf{f}_{0}$ denotes the gradient of the interpolation of the template image and $\partial \mathbf{X}_{\mathbf{v}} / \partial \mathbf{v}$ is the derivative of the endpoints of the characteristic curves with respect to the velocities, see below.
Similarly, for the solution map \cref{eq:solmapcont} that corresponds to the continuity equation, we obtain
\begin{equation*}
	\frac{\partial}{\partial \mathbf{v}} \mathbf{f}(\mathbf{v}) = \frac{\partial}{\partial \mathbf{X}_{\mathbf{v}}} (\mathbf{F}(\mathbf{X}_{\mathbf{v}}(0, 1, \mathbf{x}_{c})) \mathbf{f}_{0}(\mathbf{x}_{c})) \frac{\partial}{\partial \mathbf{v}} \mathbf{X}_{\mathbf{v}}(0, 1, \mathbf{x}_{c}).
\end{equation*}
Here, $\partial \mathbf{F} / \partial \mathbf{X}_{\mathbf{v}}$ is the derivative of the pushforward matrix with respect to the endpoints of the characteristics, again see \cite[Sec.~3.1]{ManRut17}.

If explicit time stepping methods are used to solve the ODE \cref{eq:ODE}, the partial derivative $\partial \mathbf{X}_{\mathbf{v}} / \partial \mathbf{v}$ can be computed recursively.
For example, for the forward Euler approach in \cref{eq:odesolve} it is given by
\begin{multline*}
	\frac{\partial}{\partial \mathbf{v}} \mathbf{X}_{\mathbf{v}}(0, t_{k + 1}, \mathbf{x}_{c}) = \frac{\partial}{\partial \mathbf{v}} \mathbf{X}_{\mathbf{v}}(0, t_{k}, \mathbf{x}_{c}) + \Delta t \, \frac{\partial}{\partial \mathbf{v}} \mathbf{I}(\mathbf{v}, t_{k}, \mathbf{X}_{\mathbf{v}}(0, t_{k}, \mathbf{x}_{c})) \\
	+ \Delta t \, \frac{\partial}{\partial \mathbf{X}_{\mathbf{v}}} \mathbf{I}(\mathbf{v}, t_{k}, \mathbf{X}_{\mathbf{v}}(0, t_{k}, \mathbf{x}_{c})) \frac{\partial}{\partial \mathbf{v}} \mathbf{X}_{\mathbf{v}}(0, t_{k}, \mathbf{x}_{c}),
\end{multline*}
for all $k = 0, 1, \dots, N_{t} - 1$, with $\partial \mathbf{I} / \partial \mathbf{v}$ and $\partial \mathbf{I} / \partial \mathbf{X}_{\mathbf{v}}$ being the derivatives of the interpolation schemes with respect to the velocities and with respect to the endpoints of the characteristics, respectively.
We refer to~\cite[Chap.~3.5]{Mod09} for details.
The case where characteristics are computed backwards in time can be handled similarly.

In order to solve the finite-dimensional minimisation problem \cref{eq:discreteproblem}, we apply a inexact Gauss--Newton--Krylov method, which proceeds as follows.
Given an initial guess $\mathbf{v}^{(0)} = \mathbf{0}$, we update the velocities in each iteration $i = 0, 1, \dots$ by $\mathbf{v}^{(i + 1)} = \mathbf{v}^{(i)} + \mu \delta \mathbf{v}$ until a termination criterion is satisfied.
Here, $\mu \in \R$ denotes a step size that is determined via Armijo line search and $\delta \mathbf{v} \in \R^{N}$ is the solution to the linear system
\begin{equation}
	\mathbf{H}(\mathbf{v}^{(i)}) \delta \mathbf{v} = - \frac{\partial}{\partial \mathbf{v}} J_{\gamma, \mathbf{g}}(\mathbf{v}^{(i)}).
\label{eq:linsys}
\end{equation}
For details on the stopping criteria and the line search we refer to~\cite[Chap.~6.3.3]{Mod09}.
We solve the system \cref{eq:linsys} approximately by means of a preconditioned conjugate gradient (PCG) method, which can be implemented matrix-free whenever the derivative of $\mathbf{K}$ and its adjoint can be computed matrix-free.
See \cite[Sec.~3.2]{ManRut17} for further details on the preconditioning.

Due to the non-convexity of \cref{eq:reducedfunctional} and to speed up computation, we use a multilevel strategy in order to reduce the risk of ending up in a local minimum, see \cite{HabMod06}.
On each level, we use a subsampled version of the velocities that were computed on the previous, more coarser, discretisation as initial guess.

While standard image registration typically uses resampling of the template and the target image \cite[Chap.~3.7]{Mod09}, the approach described here requires multilevel versions of the operator $\mathbf{K}$ together with a suitable method for resampling the measurements $\mathbf{g}$.
We stress that, if these are not available, optimisation can as well just be performed on the finest discretisation level.

In the following, we assume that $\mathbf{K}$ is a discretisation of the Radon transform \cite{Nat01}, which is a linear operator, and outline a suitable procedure for creating multilevel versions of the operator and the measured data.
The former is easily achieved with a computational backend such as ASTRA~\cite{AarPalCanJanBle16, AarPalBeeAltBal15}, which allows to explicitly specify the number of grid cells used to discretise the measurement geometry.
For the sake of simplicity, we restrict the presentation here to the case where $n = 2$, i.e.~$\Omega \subset \R^{2}$, and $\mathbf{K}$ is linear.

Let us assume that the number of grid cells used to discretise $\Omega$ at the finest level is $m = 2^{\ell}$, $\ell \in \N$.
In our experiments, we set the number of grid cells of the one-dimensional measurement domain $(0, 1)$ at the current level $k \le \ell$ to $q^{(k)} = 1.5 \cdot 2^{(k)}$ and set the length of each cell to $h_{Y}^{(k)} = 1 / q^{(k)}$.
Then, a multilevel representation of each measurement $\mathbf{g}_{i}$, $i \le p$, at cell-centred grid points $\mathbf{y}_{j} = (j - 1/2) h_{Y}^{(k - 1)}$ is given by
\begin{equation*}
	\mathbf{g}_{i}^{(k - 1)}(\mathbf{y}_{j}) \coloneqq \left( \mathbf{g}_{i}^{(k)}(\mathbf{y}_{j}) + \mathbf{g}_{i}^{(k)}(\mathbf{y}_{j} + h_{Y}^{(k)}) \right) / 4,
\end{equation*}
where the denominator arises from averaging over two neighbouring grid points and dividing the edge length of the imaging domain $\Omega$ in each coordinate direction in half.
The approach can be extended to higher dimensions.

\section{Numerical examples} \label{sec:experiments}

\begin{figure}[t]
	\captionsetup[subfigure]{justification=centering}
	\centering
	\begin{subfigure}[t]{0.32\textwidth}
		\centering
		\includegraphics[width = 0.98\textwidth]{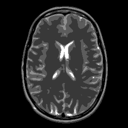}
		\caption{Template image.}
		\label{fig:BrainArtificialData:template}
	\end{subfigure}		
	\begin{subfigure}[t]{0.32\textwidth}
		\centering
		\includegraphics[width = 0.98\textwidth]{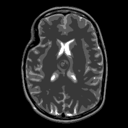}
		\caption{Unknown image.}
	\end{subfigure}	
	\begin{subfigure}[t]{0.32\textwidth}
		\centering
		\includegraphics[width = 0.98\textwidth]{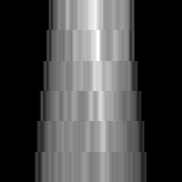}
		\caption{Measured (sinogram) data without noise.}
	\end{subfigure}	
	\caption{Synthetic example based on an artificial brain image \cite{GueLejPruUns12} that has been deformed manually. We generated six Radon transform measurements that correspond to six equally spaced angles from the interval $[0, 60]$ degrees.}
	\label{fig:BrainArtificialData}
\end{figure}

In our numerical experiments we use the Radon transform \cite{Nat01} as operator.
Other choices are possible and, assuming that one has access to a suitable resampling procedure for the measured data, the multilevel strategy can be applied as well.
The aim here is to investigate the reconstruction quality with different regularisation functionals, distances, and noise levels for both PDE constraints.
We show synthetic examples for the settings $n = 2$ and $n = 3$, and non-synthetic examples for $n = 2$ using real X-ray tomography data.
In the synthetic case, all shown reconstructions were computed from measurements taken from at most 10 directions (i.e.\ angles) sampled from intervals within $[0, 180]$ degrees.

All computations were performed using an Intel Xeon E5-2630 v4 $2.2 \, \mathrm{GHz}$ server equipped with $128 \, \mathrm{GB}$ RAM and an NVIDIA Quadro P6000 GPU featuring $24 \, \mathrm{GB}$ of memory.
The GPU was only used for computing the Radon transform of 3D volumes.

Before we proceed, we give a brief idea of suitable parameter choices.
For the multilevel approach we used in each synthetic example $32\times32$ pixels at the coarsest level and $128\times128$ pixels at the finest level, i.e.\ $\ell = 7$.
The size of the reconstructed images in the nonsynthetic examples was $128\times128$.
Again, three levels were used.
In the synthetic 3D example the reconstructed volume was $32 \times 32 \times 32$ and the coarsest level was $8 \times 8 \times 8$.

We used time dependent velocity fields with only one time step, i.e.\ $n_{t} = 1$, since this keeps the computational cost reasonable and sufficed for our examples.
The characteristics were computed using five Runge--Kutta steps, i.e.\ $N_{t} = 5$.

The spatial regularisation parameter depends on the chosen regularisation functional and the noise level, and was chosen in the order of $10^{-3}$, $10^{0}$, and $10^{3}$ for third-order, curvature, and diffusion regularisation, respectively, in the noisefree case and using the NCC-based distance.
The temporal regularisation parameter is less sensitive and was chosen in the order of $10^2$.
Furthermore, the parameter corresponding to the norm of $L^{2}(\Omega, \R^{n})$ in \cref{eq:normv} was set to $10^{-6}$.

\begin{figure}[t]
	\captionsetup[subfigure]{justification=centering}
	\centering
	\begin{subfigure}[t]{0.24\textwidth}
		\centering
		\includegraphics[width = 0.98\textwidth]{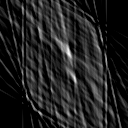}
		\caption{Reconstruction using filtered back-projection.}
		\label{fig:comparison:fbp}
	\end{subfigure}		
	\begin{subfigure}[t]{0.24\textwidth}
		\centering
		\includegraphics[width = 0.98\textwidth]{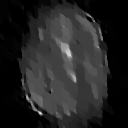}
		\caption{Reconstruction using $\mathcal{R}_1$ (TV reconstruction).}
		\label{fig:comparison:l2tv}
	\end{subfigure}	
	\begin{subfigure}[t]{0.24\textwidth}
		\centering
		\includegraphics[width = 0.98\textwidth]{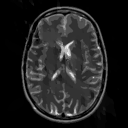}
		\caption{Reconstruction using $\mathcal{R}_2$ with given template.}
		\label{fig:comparison:l2tv2}
	\end{subfigure}
	\begin{subfigure}[t]{0.24\textwidth}
		\centering
		\includegraphics[width = 0.98\textwidth]{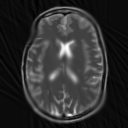}
		\caption{Reconstruction using the metamorphosis approach \cite{GriCheOkt18}.}
		\label{fig:comparison:metamorphosis}
	\end{subfigure}
	\caption{Comparison of different reconstruction models applied to an artificial brain image \cite{Mod09} that has been deformed manually. We generated six measurements that correspond to six equally spaced angles from the interval $[0, 60]$ degrees.}
	\label{fig:comparison}
\end{figure}

In our first example, we investigate different regularisation functionals with different noise levels together with the the transport equation.
The target is 2D Radon transform data based on a digital brain image and the template is a deformed version thereof, see \cref{fig:BrainArtificialData}.
Since we want to focus on the behaviour of the regularisation functionals, we do not treat the continuity equation here.
The data was generated using parallel beam tomography with only six equally distributed angles from the interval $[0, 60]$ degrees and was corrupted with Gaussian white noise of different levels.

\cref{fig:comparison} shows results obtained from the generated noisefree measurements using four existing methods.
In \cref{fig:comparison:fbp} filtered backprojection was used.
In \cref{fig:comparison:l2tv,fig:comparison:l2tv2}, the following two total variation regularisation-based models, see e.g.~\cite{CanRomTao06},
\begin{equation*}
	\min_{\mathbf{u}} \Vert \mathbf{K}\mathbf{u} - \mathbf{g} \Vert^{2} + \gamma \mathcal{R}_i(\mathbf{u}),
\end{equation*}
with $\mathcal{R}_1(\mathbf{u}) \coloneqq \mathrm{TV}(\mathbf{u})$, $\mathcal{R}_2(\mathbf{u}) \coloneqq \mathrm{TV}(\mathbf{u} - \mathbf{f}_{0})$, and $\gamma > 0$ were used.
Here, $\mathcal{R}_2(\mathbf{u})$ incorporates template information.
Approximate minimisers of both functionals were computed using the primal-dual hybrid gradient method \cite{ChaPoc11}.
For the case of filtered backprojection, the standard MATLAB implementation was used.
The results in \crefrange{fig:comparison:fbp}{fig:comparison:l2tv2} highlight why more sophisticated methods, such as the proposed template-based approach, are necessary to obtain satisfying reconstructions in this setting, and illustrate the challenges when dealing with very sparse data.

As outlined in \cref{sec:intro}, one possibility is the metamorphosis approach \cite{GriCheOkt18}.
In \cref{fig:comparison:metamorphosis} we show a result obtained with this method using the recommended parameters.
However, 200 iterations of gradient descent were performed, and the regularisation parameters were set to $\gamma = 10^{-5}$ and $\tau = 1$.
Observe the change in image intensities compared to \cref{fig:BrainArtificialData:template} and the blur in the heavily deformed regions.

In \cref{fig:BrainArtificial}, we show results for the different noise levels and different regularisation functionals computed with our approach.
All results were obtained using the NCC-based distance.
As expected, the quality of the reconstruction gets worse for higher noise levels and, consequentially, larger regularisation parameters were necessary.
Since data is acquired from only six directions, the influence of the noise is very strong.
Especially for the diffusive regularisation we needed to choose large regularisation parameters for higher noise levels, see \cref{fig:BrainError}.
Since diffusion corresponds to first-order regularisation, it is much easier to reconstruct the noise with ``rough'' deformations.
Overall, we found that second- and third-order regularisation performed similar when appropriate regularisation parameters were chosen.
Even though some theoretical results only hold for higher-order regularity, second-order regularisation seems sufficient for our use case.
The computation time for the results in \cref{fig:BrainArtificial} was between 200 and 700 seconds.

\begin{figure}
	\captionsetup[subfigure]{justification=centering}
	\centering	
	\begin{subfigure}[t]{0.32\textwidth}
		\centering
		\includegraphics[width = 0.98\textwidth]{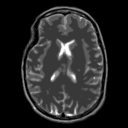}
		\caption{Diffusion regularisation, no noise, SSIM 0.920.}
	\end{subfigure}
	\begin{subfigure}[t]{0.32\textwidth}
		\centering
		\includegraphics[width = 0.98\textwidth]{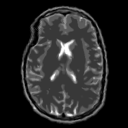}
		\caption{Diffusion regularisation, 5~\% noise, SSIM 0.867.}
	\end{subfigure}
	\begin{subfigure}[t]{0.32\textwidth}
		\centering
		\includegraphics[width = 0.98\textwidth]{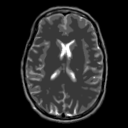}
		\caption{Diffusion regularisation, 10~\% noise, SSIM 0.798.}
		\label{fig:BrainError}
	\end{subfigure}

	\begin{subfigure}[t]{0.32\textwidth}
		\centering
		\includegraphics[width = 0.98\textwidth]{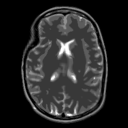}
		\caption{Curvature regularisation, no noise, SSIM 0.955.}
	\end{subfigure}
	\begin{subfigure}[t]{0.32\textwidth}
		\centering
		\includegraphics[width = 0.98\textwidth]{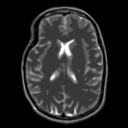}
		\caption{Curvature regularisation, 5~\% noise, SSIM 0.897.}
	\end{subfigure}
	\begin{subfigure}[t]{0.32\textwidth}
		\centering
		\includegraphics[width = 0.98\textwidth]{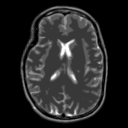}
		\caption{Curvature regularisation, 10~\% noise, SSIM 0.823.}
	\end{subfigure}

	\begin{subfigure}[t]{0.32\textwidth}
		\centering
		\includegraphics[width = 0.98\textwidth]{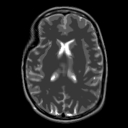}
		\caption{Third-order regularisation, no noise, SSIM 0.950.}
	\end{subfigure}
	\begin{subfigure}[t]{0.32\textwidth}
		\centering
		\includegraphics[width = 0.98\textwidth]{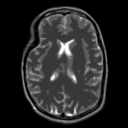}
		\caption{Third-order regularisation, 5~\% noise, SSIM 0.901.}
	\end{subfigure}
	\begin{subfigure}[t]{0.32\textwidth}
		\centering
		\includegraphics[width = 0.98\textwidth]{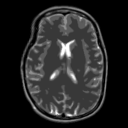}
		\caption{Third-order regularisation, 10~\% noise, SSIM 0.798.}
	\end{subfigure}
	\caption{Reconstructions for the artificial brain image in \cref{fig:BrainArtificialData} using our method and different regularisation functionals. Note that only six measurements were used. The measured data was corrupted with noise of different levels.}
	\label{fig:BrainArtificial}
\end{figure}

In the second example, see \cref{fig:Hands}, we compare the behaviour of the SSD and the NCC-based distance.
The example consists of two different hands which, in addition, are rotated relative to each other.
Here, the deformation is much larger than in the previous example, but still fairly regular.
The data was generated similarly to the previous example, but with only five angles from the interval $[0, 75]$ degrees.
Note also that the intensities of the template and target image are different (roughly by a factor of two).
First, we discuss the transport equation.
The intensity difference is a serious issue if we use the SSD distance, as we can see in \cref{fig:HandsError}.
The hand is deformed into a smaller version in order to compensate the differences. If we use the NCC-based distance instead, which can deal with such discrepancies, the result is much better from a visual point of view.
The shapes are well-aligned.
The resulting SSIM value is still low, which is not surprising since SSIM is not invariant with respect to intensity differences between perfectly aligned images.
However, neither of the two approaches is able to remove or create any of the additional (noise) structures in the images.
For the combination SSD with continuity equation, no satisfactory results could be obtained.
Since no change of intensity is possible by changing the size of the hand, part of it is moved outside of the image.
This behaviour could potentially be corrected if other boundary conditions are used in the implementation.
Therefore, we do not provide an example image for this case.
Using the NCC-based distance, the results look similar as for the transport equation with slightly worse SSIM value.
These results suggest that the NCC-based distance is a more robust choice that avoids unnatural deformations, which would be required in the case of SSD to compensate intensity differences.
In this example, the computation time was between 50 and 325 seconds.
 
\begin{figure}[t]
	\captionsetup[subfigure]{justification=centering}
	\centering
	\begin{subfigure}[t]{0.32\textwidth}
		\centering
		\includegraphics[width = 0.98\textwidth]{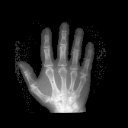}
		\caption{Template image.}
	\end{subfigure}		
	\begin{subfigure}[t]{0.32\textwidth}
		\centering
		\includegraphics[width = 0.98\textwidth]{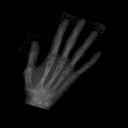}
		\caption{Unknown image.}
	\end{subfigure}	
	\begin{subfigure}[t]{0.32\textwidth}
		\centering
		\includegraphics[width = 0.98\textwidth]{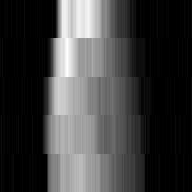}
		\caption{Measured noisy (sinogram) data.}
	\end{subfigure}	
	
	\begin{subfigure}[t]{0.32\textwidth}
		\centering
		\includegraphics[width = 0.98\textwidth]{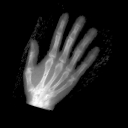}
		\caption{NCC-based distance with transport equation, SSIM 0.562.}
	\end{subfigure}
	\begin{subfigure}[t]{0.32\textwidth}
		\centering
		\includegraphics[width = 0.98\textwidth]{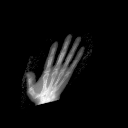}
		\caption{SSD distance with transport equation, SSIM 0.568.}
		\label{fig:HandsError}
	\end{subfigure}
	\begin{subfigure}[t]{0.32\textwidth}
		\centering
		\includegraphics[width = 0.98\textwidth]{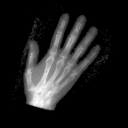}
		\caption{NCC-based distance with continuity equation, SSIM 0.555.}
	\end{subfigure}

	\caption{Reconstructions of manually deformed Hand \cite{Mod09} images with different image intensity levels using our method. We generated five measurements that correspond to five equally spaced angles from the interval $[0, 75]$ degrees and added five percent noise.}
	\label{fig:Hands}	
\end{figure}

In the next example, see \cref{fig:HNSP}, we compare the continuity equation with the transport equation as constraint together with the NCC-based distance.
The continuity equation allows for limited change of mass along the deformation path.
Since the intensity change scales with the determinant of the Jacobian, bigger changes are only possible if areas are compressed or extended a lot.
In the presented example this occurs only to a mild extent.
For this example, the continuity equation and the transport equation yield visually similar results with minor differences in the SSIM value.
As in the previous examples, higher-order regularisation is beneficial and artefacts occur for the diffusion regularisation.
The computation time amounted to roughly 64 to 360 seconds in this example.

\begin{figure}[h]
	\captionsetup[subfigure]{justification=centering}
	\centering
	\begin{subfigure}[t]{0.32\textwidth}
		\centering
		\includegraphics[width = 0.98\textwidth]{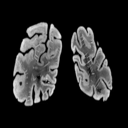}
		\caption{Template image.}
	\end{subfigure}		
	\begin{subfigure}[t]{0.32\textwidth}
		\centering
		\includegraphics[width = 0.98\textwidth]{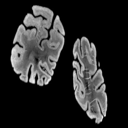}
		\caption{Unknown image.}
	\end{subfigure}	
	\begin{subfigure}[t]{0.32\textwidth}
		\centering
		\includegraphics[width = 0.98\textwidth]{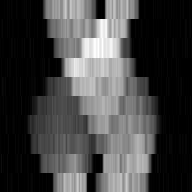}
		\caption{Measured noisy (sinogram) data.}
	\end{subfigure}	
	
	\begin{subfigure}[t]{0.32\textwidth}
		\centering
		\includegraphics[width = 0.98\textwidth]{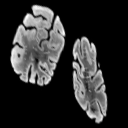}
		\caption{Continuity equation with third-order regularisation, SSIM 0.910.}
	\end{subfigure}
	\begin{subfigure}[t]{0.32\textwidth}
		\centering
		\includegraphics[width = 0.98\textwidth]{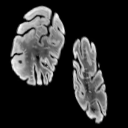}
		\caption{Continuity equation with curvature regularisation, SSIM 0.753.}
	\end{subfigure}
	\begin{subfigure}[t]{0.32\textwidth}
		\centering
		\includegraphics[width = 0.98\textwidth]{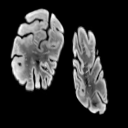}
		\caption{Continuity equation with diffusion regularisation, SSIM 0.560.}
	\end{subfigure}
	
	\begin{subfigure}[t]{0.32\textwidth}
		\centering
		\includegraphics[width = 0.98\textwidth]{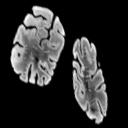}
		\caption{Transport equation with third-order regularisation, SSIM 0.913.}
	\end{subfigure}
	\begin{subfigure}[t]{0.32\textwidth}
		\centering
		\includegraphics[width = 0.98\textwidth]{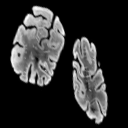}
		\caption{Transport equation with curvature regularisation, SSIM 0.912.}\label{fig:BadGeo}
	\end{subfigure}
	\begin{subfigure}[t]{0.32\textwidth}
		\centering
		\includegraphics[width = 0.98\textwidth]{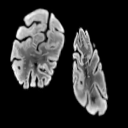}
		\caption{Transport equation with diffusion regularisation, SSIM 0.580.}
	\end{subfigure}
	\caption{Reconstructions for the HNSP \cite{Mod09} image using our approach, different regularisation functionals, and different PDE constraints. Here, ten measurements corresponding to ten angles equally distributed in the interval $[0, 180]$ degrees were taken. The measured data was corrupted with five percent noise.}
	\label{fig:HNSP}
\end{figure}

In \cref{fig:circles}, we created an artificial pair of images consisting of a disk to show the possibilities of intensity changes when using the continuity equation as a constraint.
Both template and unknown image were constructed so that their total mass is equal.
The measurements were generated as before using only five angles uniformly distributed in the interval $[0, 90]$ degrees.
Furthermore, we used curvature regularisation.
For the transport equation we observe that the shape is matched, but the intensity is not correct, see \cref{fig:circlesgeo}.
If we use the continuity equation instead, intensity changes are possible, which can be observed in \cref{fig:circlesmp}.
The computation time for the two results was 90 and 500 seconds.

In order to demonstrate the practicality of our method, we computed results from nonsynthetic X-ray tomography data \cite{BubHauHuoRimSil16, HamHarKalKujNie15}, which are available online.\footnote{\url{https://doi.org/10.5281/zenodo.1254204}}\textsuperscript{,}\footnote{\url{https://doi.org/10.5281/zenodo.1254206}}
See \Cref{fig:realdata} for these two examples (`lotus' and `walnut').
The template was generated by applying filtered backprojection to the full measurements and by subsequently deforming it.
Then, this deformed templated was used in our method to compute a reconstruction from only few measurement directions.
The computation time amounted to roughly 80 and 600 seconds in these examples.
In both nonsynthetic examples the use of the NCC-based distance proved crucial and no satisfactory result could be obtained using SSD.

In \cref{fig:mice3D}, we demonstrate that our framework is also capable of reconstructing 3D volumes.
Here, we used the SSD distance together with curvature regularisation and the transport equation.
We applied the 3D Radon transform to obtain ten measurements from angles within $[0, 180]$. 
The total computation time was roughly 800 seconds.

All in all, our results demonstrate that, given a suitable template image, very reasonable reconstructions can efficiently be obtained from only a few measurements, even in the presence of noise.
Moreover, our examples show that the NCC-based distance adds robustness to the approach with regard to discrepancies in the image intensities.

\begin{figure}[t]
	\captionsetup[subfigure]{justification=centering}
	\centering
	\begin{subfigure}[t]{0.19\textwidth}
		\centering
		\includegraphics[width = 0.99\textwidth]{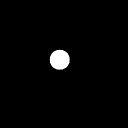}
		\caption{Template image.}
	\end{subfigure}		
	\begin{subfigure}[t]{0.19\textwidth}
		\centering
		\includegraphics[width = 0.99\textwidth]{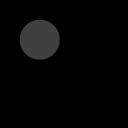}
		\caption{Unknown image.}
	\end{subfigure}	
	\begin{subfigure}[t]{0.19\textwidth}
		\centering
		\includegraphics[width = 0.99\textwidth]{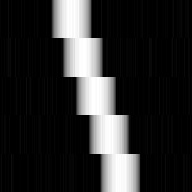}
		\caption{Measured noisy (sinogram) data.}
	\end{subfigure}	
	\begin{subfigure}[t]{0.19\textwidth}
		\centering
		\includegraphics[width = 0.99\textwidth]{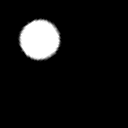}
		\caption{Transport equation, SSIM 0.880.}
		\label{fig:circlesgeo}
	\end{subfigure}
	\begin{subfigure}[t]{0.19\textwidth}
		\centering
		\includegraphics[width = 0.99\textwidth]{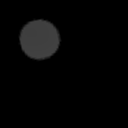}
		\caption{Continuity equation, SSIM 0.922.}
		\label{fig:circlesmp}
	\end{subfigure}
	\caption{Reconstructions of an image showing a disk obtained with our method. Five measurements were taken at directions corresponding to five angles equally distributed in $[0, 90]$ degrees. As before, five percent noise was added.}
	\label{fig:circles}	
\end{figure}

\begin{figure}[b]
	\captionsetup[subfigure]{justification=centering}
	\centering
	\begin{subfigure}[t]{0.24\textwidth}
		\centering
		\includegraphics[width = 0.98\textwidth]{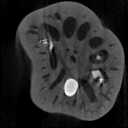}
		\caption{Template image.}
	\end{subfigure}
	\begin{subfigure}[t]{0.24\textwidth}
		\centering
		\includegraphics[width = 0.98\textwidth]{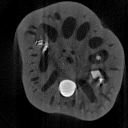}
		\caption{Unknown image.}
	\end{subfigure}
	\begin{subfigure}[t]{0.24\textwidth}
		\centering
		\includegraphics[width = 0.98\textwidth]{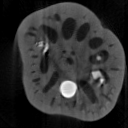}
		\caption{Reconstruction, SSIM 0.984.}
	\end{subfigure}
	\begin{subfigure}[t]{0.24\textwidth}
		\centering
		\includegraphics[width = 0.98\textwidth]{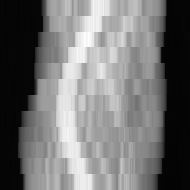}
		\caption{Measured Radon transform data.}
	\end{subfigure}
	\begin{subfigure}[t]{0.24\textwidth}
		\centering
		\includegraphics[width = 0.98\textwidth]{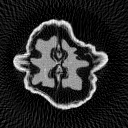}
		\caption{Template image.}
	\end{subfigure}
	\begin{subfigure}[t]{0.24\textwidth}
		\centering
		\includegraphics[width = 0.98\textwidth]{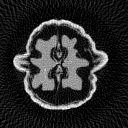}
		\caption{Unknown image.}
	\end{subfigure}
	\begin{subfigure}[t]{0.24\textwidth}
		\centering
		\includegraphics[width = 0.98\textwidth]{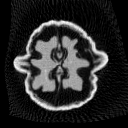}
		\caption{Reconstruction, SSIM 0.992.}
	\end{subfigure}
	\begin{subfigure}[t]{0.24\textwidth}
		\centering
		\includegraphics[width = 0.98\textwidth]{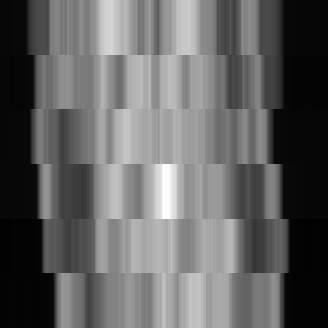}
		\caption{Measured Radon transform data.}
	\end{subfigure}
	\caption{Reconstructions based on nonsynthetic X-ray tomographic measurements \cite{BubHauHuoRimSil16, HamHarKalKujNie15} computed with our method using the transport equation together with curvature regularisation. Measurements from twelve and six directions with angles in $[0, 180]$ degrees were used.}
	\label{fig:realdata}
\end{figure}

\begin{figure}[h]
	\captionsetup[subfigure]{justification=centering}
	\centering
	\begin{subfigure}[t]{0.49\textwidth}
		\centering
		\includegraphics[width = 0.98\textwidth]{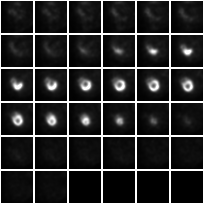}
		\caption{Template volume.}
		\label{fig:mice3D:template}
	\end{subfigure}
	\begin{subfigure}[t]{0.49\textwidth}
		\centering
		\includegraphics[width = 0.98\textwidth]{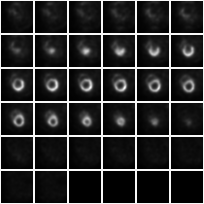}
		\caption{Unknown volume.}
		\label{fig:mice3D:unknown}
	\end{subfigure}
	\begin{subfigure}[t]{0.49\textwidth}
		\centering
		\includegraphics[width = 0.98\textwidth]{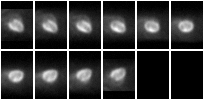}
		\caption{Measured noisy Radon transform data.}
		\label{fig:mice3D:data}
	\end{subfigure}
	\begin{subfigure}[t]{0.49\textwidth}
		\centering
		\includegraphics[width = 0.98\textwidth]{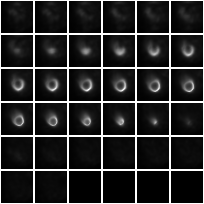}
		\caption{Reconstruction using the SSD distance, curvature regularisation, and the transport equation, SSIM 0.887.}
		\label{fig:mice3D:reconstruction}
	\end{subfigure}
	\caption{Reconstructions of a 3D volume (`mice3D', see \cite{Mod09}) using our method. In \cref{fig:mice3D:template}, \cref{fig:mice3D:unknown}, and \cref{fig:mice3D:reconstruction}, slices (left to right, top to bottom) of each volume along the third coordinate direction are shown. In \cref{fig:mice3D:data}, slices of the 3D Radon transform measurements are shown. Each slice corresponds to one measurements direction. In total, only ten measurements were taken at angles equally distributed in $[0, 180]$ degrees. As before, five percent noise was added.}
	\label{fig:mice3D}
\end{figure}

\section{Conclusions}
Overall, our numerical examples show that our implementation yields good results, as long as the deformation between template and target is fairly regular. By using the NCC-based distance, robustness with respect to intensity differences between the template and the target image can be achieved.
As already mentioned in the introduction, we do not follow the metamorphosis approach, since there is too much flexibility in the model and the source term is very likely to reproduce noise and artefacts if the data is too limited.
It is left for further research to investigate possible adaptations of the model that allow for the appearance of new objects or structures in the reconstruction without reproducing noise or artefacts.
Possibly, the results of our method can be used as better template for other algorithms that require template information.
Finally, note that due to the great flexibility of the FAIR library, it is also possible to use a great variety of regularisation functionals for the velocities and other distances, see \cite[Chaps.~7 and 8]{Mod09}.
Additionally, our implementation is not necessarily restricted to the Radon transform and essentially every (continuous) operator can be used. The multilevel approach can be applied as long as a meaningful resampling procedure for the operator and the measured data can be provided.

\section*{Acknowledgments}
Lukas F.~Lang and Carola-Bibiane Sch\"{o}nlieb acknowledge support from the Leverhulme Trust project ``Breaking the non-convexity barrier'', the EPSRC grant EP/M00483X/1, the EPSRC Centre Nr.\ EP/N014588/1, the RISE projects ChiPS and NoMADS, the Cantab Capital Institute for the Mathematics of Information, and the Alan Turing Institute.
Sebastian Neumayer is funded by the German Research Foundation (DFG) within the Research Training  Group  1932, project area P3.
Ozan \"{O}ktem is supported by the Swedish Foundation of Strategic Research, grant AM13-0049.
We gratefully acknowledge the support of NVIDIA Corporation with the donation of the Quadro P6000 GPU used for this research.

\def\cprime{$'$} \providecommand{\noopsort}[1]{}

\end{document}